\def\itemNum$#1${\item $\displaystyle#1$
   \hfill\refstepcounter{equation}(\theequation)}
\providecommand\@dotsep{5}
\renewcommand{\listoftodos}[1][\@todonotes@todolistname]{%
  \@starttoc{tdo}{#1}}
\newcommand{\ostar}{ 
  \mathbin{
    \mathchoice
      {\buildcircleland{\displaystyle}}
      {\buildcircleland{\textstyle}}
      {\buildcircleland{\scriptstyle}}
      {\buildcircleland{\scriptscriptstyle}}
  } 
}
\newcommand\buildcircleland[1]{%
  \begin{tikzpicture}[baseline=(X.base), inner sep=0, outer sep=0]
    \node[draw,circle] (X)  {$#1\star$};
  \end{tikzpicture}%
}
\newtheorem{Lem}{Lemma}[section]
\newtheorem*{Def}{Definition}
\theoremstyle{plain}
\newtheorem{Thm}[Lem]{Theorem}
\theoremstyle{definition}
\declaretheorem[numbered=no,name=Example,qed={\lower-0.3ex\hbox{$\triangleleft$}}]{Ex}
\newtheorem*{Rem}{Remark}
\newtheorem*{Rems}{Remarks}
\newcommand{\Hom}{\text{\textnormal{Hom}}}
\newcommand{\End}{\text{\textnormal{End}}}
\newcommand{\Ext}{\text{\textnormal{Ext}}}
\newcommand{\Aut}{\text{\textnormal{Aut}}}
\mathchardef\mhyphen="2D
\newcommand{\git}{/\!\!/}
\newcommand{\Iso}{\text{\textnormal{Iso}}}
\newcommand{\coinv}{\text{\textnormal{coinv}}}
\newcommand{\prim}{\text{\textnormal{prim}}}
\newcommand{\ev}{\text{\textnormal{ev}}}
\newcommand{\op}{\text{\textnormal{op}}}
\newcommand{\id}{\text{\textnormal{id}}}
\newcommand{\Vect}{\mathsf{Vect}}
\newcommand{\Rep}{\mathsf{Rep}}
\newcommand{\Con}{\text{\textnormal{Con}}}
\newcommand{\ind}{\text{\textnormal{ind}}}
\newcommand*{\DashedArrow}[1][]{\mathbin{\tikz [baseline=-0.25ex,-latex, dashed,#1] \draw [#1] (0pt,0.5ex) -- (1.3em,0.5ex);}}%
\newcommand*{\dashtwoheadrarrow}{\DashedArrow[->>,densely dashed]}
\newbox\xrat@below
\newbox\xrat@above
\newcommand{\xrightarrowtail}[2][]{%
  \setbox\xrat@below=\hbox{\ensuremath{\scriptstyle #1}}%
  \setbox\xrat@above=\hbox{\ensuremath{\scriptstyle #2}}%
  \pgfmathsetlengthmacro{\xrat@len}{max(\wd\xrat@below,\wd\xrat@above)+.6em}%
  \mathrel{\tikz [>->,baseline=-.75ex]
                 \draw (0,0) -- node[below=-2pt] {\box\xrat@below}
                                node[above=-2pt] {\box\xrat@above}
                       (\xrat@len,0) ;}}
\newcommand*\circled[1]{\tikz[baseline=(char.base)]{
            \node[shape=circle,draw,inner sep=1.5pt] (char) {#1};}}
\begin{document}

\title[Green's theorem for Hall modules]{Degenerate versions of Green's theorem for \\ Hall modules}

\author[M.\,B. Young]{Matthew B. Young}
\address{Max Planck Institute for Mathematics\\
Vivatsgasse 7\\
53111 Bonn, Germany}
\email{myoung@mpim-bonn.mpg.de}

\date{\today}

\keywords{Quiver representations. Hall algebras and their representations.}
\subjclass[2010]{Primary: 16G20; Secondary 17B35}

\begin{abstract}
Green's theorem states that the Hall algebra of the category of representations of a quiver over a finite field is a twisted bialgebra. Considering instead categories of orthogonal or symplectic quiver representations leads to a class of modules over the Hall algebra, called Hall modules, which are also comodules. A module theoretic analogue of Green's theorem, describing the compatibility of the module and comodule structures, is not known. In this paper we prove module theoretic analogues of Green's theorem in the degenerate settings of finitary Hall modules of $\Rep_{\mathbb{F}_1}(Q)$ and constructible Hall modules of $\Rep_{\mathbb{C}}(Q)$. The result is that the module and comodule structures satisfy a compatibility condition reminiscent of that of a Yetter--Drinfeld module.
\end{abstract}

\maketitle


\setcounter{footnote}{0}

\section*{Introduction}
\addtocontents{toc}{\protect\setcounter{tocdepth}{1}}

Let $\Rep_{\mathbb{F}_q}(Q)$ be the abelian category of representations of a finite type quiver $Q$ over a finite field $\mathbb{F}_q$. Let $\mathfrak{g}_Q$ be the complex semisimple Lie algebra whose Dynkin diagram is the underlying graph of $Q$. Ringel associated to $\Rep_{\mathbb{F}_q}(Q)$ its Hall algebra $\mathcal{H}_{Q,\mathbb{F}_q}$, which he proved to be isomorphic to the positive part of the specialized quantum group $\mathcal{U}_{\sqrt{q}}^+(\mathfrak{g}_Q)$ \cite{ringel1990c}. The relationship between Hall algebras and quantum groups was later clarified by Green, who proved, firstly, that $\mathcal{H}_{Q,\mathbb{F}_q}$ is naturally a (twisted) bialgebra, a statement commonly referred to as Green's theorem, and secondly, that the isomorphism $\mathcal{H}_{Q,\mathbb{F}_q} \simeq \mathcal{U}_{\sqrt{q}}^+(\mathfrak{g}_Q)$ is one of bialgebras \cite{green1995}. More generally, a Hall algebra can be defined from any finitary abelian category and Green's theorem generalizes to assert that, if the category is hereditary, then this is a bialgebra \cite{schiffmann2012b}. Analogues of Green's theorem have been proven for other realizations of the Hall algebra. In the complex linear setting, Joyce \cite{joyce2007} associated to the (non-finitary) category $\Rep_{\mathbb{C}}(Q)$ its constructible $\mathbb{H}_Q$ and motivic $\mathbb{H}_Q^{\textnormal{mot}}$ Hall algebras. Roughly, $\mathbb{H}_Q$ can be viewed as a classical limit of $\mathbb{H}_Q^{\textnormal{mot}}$, in which the Lefschetz motive tends to one. Joyce proved that $\mathbb{H}_Q$ is a bialgebra and related $\mathbb{H}_Q$ and $\mathbb{H}_Q^{\textnormal{mot}}$ to $\mathcal{U}^+(\mathfrak{g}_Q)$ and $\mathcal{U}^+_v(\mathfrak{g}_Q)$, respectively. An analogue of Green's theorem for the cohomological Hall algebra of a symmetric quiver with potential was proven in \cite{davison2017}. In each of the above settings, the appropriate analogue of Green's theorem plays an important role in the structure theory of the Hall algebra.

Orthogonal and symplectic analogues of quiver representations have been studied by many authors \cite{sergeichuk1987}, \cite{derksen2002}, \cite{zubkov2005}. While such representations do not form an abelian category in any natural way, there is a modification of the Hall algebra construction which produces a module over the Hall algebra, called the Hall module \cite{mbyoung2018b}. In various settings, Hall modules have been shown to be related to canonical bases \cite{enomoto2009}, \cite{varagnolo2011}, representations of quantum groups \cite{mbyoung2016} and Donaldson--Thomas theory with classical structure groups \cite{mbyoung2016b}, \cite{franzenyoung2018}. While the Hall module $\mathcal{M}_{Q,\mathbb{F}_q}$ of $\Rep_{\mathbb{F}_q}(Q)$ has a natural comodule structure, the most naive analogue of Green's theorem does not hold: $\mathcal{M}_{Q,\mathbb{F}_q}$ is not a Hopf module over $\mathcal{H}_{Q,\mathbb{F}_q}$. One of the main results of \cite{mbyoung2016} (see also \cite{enomoto2009}, \cite{varagnolo2011}) is a characterization of the module-comodule compatibility when $\mathcal{M}_{Q,\mathbb{F}_q}$ is restricted to the composition subalgebra of $\mathcal{H}_{Q,\mathbb{F}_q}$. However, this characterization, being in terms of modules over Kashiwara--Enomoto's reduced $\sigma$-analogue of $\mathcal{U}_v(\mathfrak{g}_Q)$, is not easily interpreted directly in terms of the comodule structure. At present, a full description of the module-comodule compatibility is not known.

In this paper, we give a complete description of the module-comodule compatibility for two degenerate, and in some sense classical, realizations of Hall modules. The first realization is that of representations of a quiver over the so-called field with one element $\mathbb{F}_1$. The Hall algebra $\mathcal{H}_{Q, \mathbb{F}_1}$ was first studied by Szczesny \cite{szczesny2012}, who, together with collaborators, has also studied a number of other combinatorial-type Hall algebras \cite{kremnizer2009}, \cite{szczesny2012b}, \cite{szczesny2014}, \cite{eppolito2018}, \cite{szczesny2018}. See also \cite{galvez2016}. An analogue of Green's theorem holds true and plays a crucial role in these combinatorial settings. The second realization is that of constructible Hall modules of $\Rep_{\mathbb{C}}(Q)$. In both realizations, the Hall algebras and modules are equipped with degenerate coproducts and comodule structures, respectively, as opposed to their standard Hall such structures. The coproducts, for example, are cocommutative, leading to our interpretation of these objects as being classical, as opposed to quantum. Theorems \ref{thm:sdGreenClassical} and \ref{thm:sdGreenConstructible}, which are the main results of this paper, state that the Hall modules obey conditions similar to that of a Yetter--Drinfeld module, with the Hall algebra involution induced by a duality involution of the category of quiver representations playing the role of the antipode. Our results apply to more general categories than those of quiver representations. For example, we prove Theorem \ref{thm:sdGreenClassical} for finitary proto-exact categories with duality which obey a strong decomposition property. Surprisingly, this decomposition property also appears in the constructible setting. Theorem \ref{thm:sdGreenConstructible} also applies more generally, but since the background required for its proof becomes significantly longer and our primary interest is in quivers, we do not pursue this generalization.

It is a general, if at present rather vague, philosophy that $\mathcal{H}_{Q, \mathbb{F}_1}$ is a degenerate version of the $q \rightarrow 1$ limit of $\mathcal{H}_{Q,\mathbb{F}_q}$, and similarly for $\mathbb{H}_Q$ and $\mathbb{H}_Q^{\textnormal{mot}}$. It is our hope that this intuition, together with the results of this paper, can be used to further understand the module-comodule compatibility of $\mathcal{M}_{Q,\mathbb{F}_q}$ and its variants.

\subsubsection*{Acknowledgements}
During part of the preparation of this work the author was supported by a Direct Grant from the Chinese University of Hong Kong (Project No. CUHK4053289).

\section{Proto-exact categories with duality}
\addtocontents{toc}{\protect\setcounter{tocdepth}{2}}

\subsection{Proto-exact categories}

We work in the setting of proto-exact categories, a non-additive generalization of Quillen's exact categories.

\begin{Def}[{\cite[\S 2.4]{dyckerhoff2012b}}]
A proto-exact category is a pointed category $\mathcal{A}$, with zero object $0$, together with two classes of morphisms, $\mathfrak{I}$ and $\mathfrak{D}$, called inflations and deflations and denoted by $\rightarrowtail$ and $\twoheadrightarrow$, respectively, such that
\begin{enumerate}[label=(\roman*)]
\item each morphism $0 \rightarrow U$ is in $\mathfrak{I}$ and each morphism $U \rightarrow 0$ is in $\mathfrak{D}$,

\item both classes $\mathfrak{I}$ and $\mathfrak{D}$ are closed under composition and contain all isomorphisms,

\item a commutative square of the form
\begin{equation}
\label{eq:biCartDiag}
\begin{tikzpicture}[baseline= (a).base]
\node[scale=1] (a) at (0,0){
\begin{tikzcd}
U \arrow[two heads]{d} \arrow[tail]{r} & V \arrow[two heads]{d}\\
W \arrow[tail]{r} & X
\end{tikzcd}
};
\end{tikzpicture}
\end{equation}
is a pullback if and only if it is a pushout,

\item any diagram of the form $W \rightarrowtail X \twoheadleftarrow V$ can be completed to a biCartesian square of the form \eqref{eq:biCartDiag}, and

\item any diagram of the form $W \twoheadleftarrow U \rightarrowtail V$ can be completed to a biCartesian square of the form \eqref{eq:biCartDiag}.
\end{enumerate}
\end{Def}

A biCartesian square of the form \eqref{eq:biCartDiag} with $W=0$ is called a conflation.

\begin{Ex}
\begin{enumerate}[label=(\roman*)]
\item Any exact category, and hence any abelian category, has a canonical proto-exact structure. A conflation is simply a short exact sequence.

\item The category of vector spaces over $\mathbb{F}_1$ admits a (non-exact) proto-exact structure. See Section \ref{sec:f1VectSpace} for details.
\end{enumerate}
\end{Ex}

The following definitions are motivated by \cite{eppolito2018}.

\begin{Def}
Let $\mathcal{A}$ be a proto-exact category which admits finite coproducts.
\begin{enumerate}[label=(\roman*)]
\item The category $\mathcal{A}$ is called combinatorial if, for each admissible subobject $i: U \rightarrowtail X_1 \oplus X_2$, there exist admissible subobjects $i_k: U_k \rightarrowtail X_k$, $k=1,2$, and an isomorphism $f: U \rightarrow U_1 \oplus U_2$ such that $(i_1 \oplus i_2) \circ f = i$.

\item The category $\mathcal{A}$ is said to have the finite direct decomposition property if any object $W \in \mathcal{A}$ admits only finitely many decompositions $U \oplus V \simeq W$ up to isomorphism.
\end{enumerate}
\end{Def}

\addtocontents{toc}{\protect\setcounter{tocdepth}{2}}

\subsection{Categories with duality}

For background material on categories with duality, see \cite{schlichting2010}. The notation $\mathcal{A}^{\op}$ indicates the opposite of a category $\mathcal{A}$.

\begin{Def}
A category with duality is a triple $(\mathcal{A}, P, \Theta)$ consisting of a category $\mathcal{A}$, a functor $P: \mathcal{A}^{\op} \rightarrow \mathcal{A}$ and a natural isomorphism $\Theta: \id_{\mathcal{A}} \Rightarrow P \circ P^{\op}$ such that $P(\Theta_U) \circ \Theta_{P(U)} = \id_{P(U)}$ for all objects $U \in \mathcal{A}$.
\end{Def}

A (non-degenerate) symmetric form in $(\mathcal{A}, P, \Theta)$ is a pair $(N, \psi_N)$, often denoted by just $N$, consisting of an object $N \in \mathcal{A}$ and an isomorphism $\psi_N: N \rightarrow P(N)$ which satisfies $P(\psi_N) \circ \Theta_N = \psi_N$. An isometry of symmetric forms $\phi: M \rightarrow N$ is an isomorphism of the underlying objects which satisfies $\psi_M = P(\phi) \circ  \psi_N \circ \phi$. The notation $M \simeq_P N$ indicates that $M$ and $N$ are isometric. Denote by $\Aut_P(N) \leq \Aut(N) $ the group of self-isometries of $N$. The groupoid of symmetric forms in $\mathcal{A}$ and their isometries is denoted by $\mathcal{A}_{\textnormal{h}}$.

\begin{Def}
A proto-exact category with duality is a category with duality $(\mathcal{A}, P, \Theta)$ such that $\mathcal{A}$ is proto-exact and $P$ is exact, that is
\begin{enumerate}[label=(\roman*)]
\item $P(0) \simeq 0$,
\item a morphism $U \xrightarrow[]{\phi} V$ is in $\mathfrak{I}$ if and only if $P(V) \xrightarrow[]{P(\phi)} P(U)$ is in $\mathfrak{D}$, and
\item a square of the form \eqref{eq:biCartDiag} is biCartesian if and only if the square
\[
\begin{tikzpicture}[baseline= (a).base]
\node[scale=1] (a) at (0,0){
\begin{tikzcd}
P(X) \arrow[two heads]{d} \arrow[tail]{r} & P(V) \arrow[two heads]{d}\\
P(W) \arrow[tail]{r} & P(U)
\end{tikzcd}
};
\end{tikzpicture}
\]
is biCartesian.
\end{enumerate}
\end{Def}

Let $N$ be a symmetric form in $\mathcal{A}$ and let $i : U \rightarrowtail N$ be an inflation. The orthogonal $U^{\perp}$ is defined to be a kernel of the composition $P(i) \circ \psi_N$. The existence of $U^{\perp}$ is ensured by the assumption that $i$ is an inflation and $P$ is exact. The inflation $i$ is called isotropic if $P(i) \circ \psi_N \circ i =0$ and the monomorphism $U \hookrightarrow U^{\perp}$ is an inflation.

All proto-exact categories in this paper are assumed to have the following property, called the Reduction Assumption in \cite[\S 3.4]{mbyoung2018b}: If $N$ is a symmetric form and $U \rightarrowtail N$ is isotropic with orthogonal $k: U^{\perp} \rightarrowtail N$, then the quotient $U^{\perp} \slash U$, which we denote by $N \git U$, admits a symmetric form $\psi_{N \git U}$, unique up to isometry, which makes the diagram
\[
\begin{tikzpicture}[baseline= (a).base]
\node[scale=1] (a) at (0,0){
\begin{tikzcd}[column sep=3.0em, row sep=0.5em]
{} & N  \arrow{r}[above]{\psi_N} & P(N) \arrow[two heads]{dr}[above]{P(k)} & {} \\
U^{\perp} \arrow[tail]{ur}[above left]{k} \arrow[two heads]{dr}[below left]{\pi} & {} & {} & P(U^{\perp}) \\
{} & N \git U  \arrow{r}[below]{\psi_{N \git U}} & P(N \git U) \arrow[tail]{ur}[below]{P(\pi)} & {} \\
\end{tikzcd}
};
\end{tikzpicture}
\]
commute. We will use the notation
\[
U \rightarrowtail N \dashtwoheadrarrow  N \git U
\]
to indicate the $U$ is isotropic in $N$ with reduction $N \git U$, and call this a symmetric conflation. By \cite[Proposition 5.2]{quebbemann1979}, exact categories with duality satisfy the Reduction Assumption.

Suppose that $\mathcal{A}$ admits finite coproducts. In this case we will always assume that $P$ is additive. We can then make the following constructions. First, the coproduct of symmetric forms is defined by
\[
(M,\psi_M) \oplus (N, \psi_N) = (M \oplus N, \psi_M \oplus \psi_N).
\]
Secondly, given an object $U \in \mathcal{A}$, the hyperbolic symmetric form $H(U)$ on $U$ is defined to be the pair
\[
(U \oplus P(U), \left( \begin{smallmatrix} 0 & \id_{P(U)} \\ \Theta_U & 0 \end{smallmatrix} \right)).
\]
Note that there is a canonical inclusion $\Aut(U) \hookrightarrow \Aut_P(H(U))$.

We end this section with two basic results.

\begin{Lem}
\label{lem:hyperbolicIso}
Let $\mathcal{A}$ be a proto-exact category with duality which admits finite coproducts and satisfies the Reduction Assumption. Let $i: U_1 \rightarrowtail X$ be an inflation and $ \pi: X \twoheadrightarrow U_2$ a deflation. The following statements are equivalent:
\begin{enumerate}[label=(\roman*)]
\item The subobject $U_1 \oplus P(U_2) \subset H(X)$ is isotropic.

\item The inflation $U_1 \rightarrowtail X$ factors through $\ker(X \twoheadrightarrow U_2) \rightarrowtail X$.

\item The quotient $X \twoheadrightarrow U_2$ factors through $X \twoheadrightarrow X \slash U_1$.
\end{enumerate}
Moreover, if any of the above conditions hold, then there is a canonical isometry
\[
H(X) \git (U_1 \oplus P(U_2)) \simeq_P H(\ker(X \twoheadrightarrow U_2) \slash U_1).
\]
\end{Lem}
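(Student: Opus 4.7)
The plan is to reduce all three conditions in the equivalence to the single identity $\pi \circ i = 0$, then to compute the orthogonal and the reduction in the moreover part by direct calculation.

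For the equivalence of (i), (ii), (iii), I would first observe that the inclusion $\iota : U_1 \oplus P(U_2) \rightarrowtail H(X) = X \oplus P(X)$ is $i \oplus P(\pi)$, with $P(\pi)$ an inflation by exactness of $P$. Computing the isotropy map $P(\iota) \circ \psi_{H(X)} \circ \iota$ by plugging in the matrix $\bigl(\begin{smallmatrix} 0 & \id \\ \Theta & 0 \end{smallmatrix}\bigr)$ and using the naturality identity $P(P(\phi)) \circ \Theta = \Theta \circ \phi$, I expect the two non-trivial matrix entries of the composite map $U_1 \oplus P(U_2) \to P(U_1) \oplus P(P(U_2))$ to reduce to $\Theta_{U_2} \circ \pi \circ i$ and $P(\pi \circ i)$, respectively. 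Since $\Theta_{U_2}$ is an isomorphism and $P$ reflects zero morphisms, both entries vanish iff $\pi \circ i = 0$. Conditions (ii) and (iii) are then exactly the universal properties of $\ker(\pi)$ and $\coker(i) = X / U_1$; the only proto-exact subtlety is to check that the induced factorizations $U_1 \to \ker(\pi)$ and $X/U_1 \to U_2$ are themselves an inflation and a deflation, which I would handle by the standard cancellation for inflations in a proto-exact category (if $k \circ \tilde i$ is an inflation and $k$ is an inflation, then $\tilde i$ is).

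For the moreover part, I would compute $(U_1 \oplus P(U_2))^\perp$ explicitly. Under the decomposition $P(X \oplus P(X)) \simeq P(X) \oplus P(P(X))$, the map $P(\iota) \circ \psi_{H(X)} : X \oplus P(X) \to P(U_1) \oplus P(P(U_2))$ restricts to $P(i)$ on the $P(X)$-summand and, by naturality, to $\Theta_{U_2} \circ \pi$ on the $X$-summand. Writing $K := \ker(\pi)$ and applying $P$ to the conflation $U_1 \rightarrowtail X \twoheadrightarrow X/U_1$, the kernel of $P(i)$ is $P(X/U_1)$. Hence $(U_1 \oplus P(U_2))^\perp = K \oplus P(X/U_1)$, and the isotropic inflation is the direct sum of $U_1 \rightarrowtail K$ (from (ii)) and $P(\bar\pi) : P(U_2) \rightarrowtail P(X/U_1)$, where $\bar\pi : X/U_1 \twoheadrightarrow U_2$ is the deflation produced by (iii). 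Quotienting componentwise yields
\[
H(X) \git (U_1 \oplus P(U_2)) \simeq (K/U_1) \oplus \bigl( P(X/U_1) / P(U_2) \bigr).
\]
Applying $P$ to the conflation $K/U_1 \rightarrowtail X/U_1 \twoheadrightarrow U_2$ (whose kernel identification follows from a noether-type argument valid in proto-exact categories) identifies the second summand with $P(K/U_1)$, so the underlying object of the reduction is $H(K/U_1)$.

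The main obstacle I anticipate is the final step: verifying that the symmetric form induced by the Reduction Assumption on this quotient coincides with the hyperbolic form $H(K/U_1)$, rather than some twist. I would handle this by tracing the off-diagonal matrix entries of $\psi_{H(X)}$ through the commutative diagram which characterizes $\psi_{N \git U}$ uniquely up to isometry. Since the passages to orthogonal and quotient both respect the $X \oplus P(X)$ decomposition, the induced matrix should again be anti-diagonal; checking that the resulting entries are $\id_{P(K/U_1)}$ and $\Theta_{K/U_1}$ amounts to a short diagram chase on the relevant subquotient, using naturality of $\Theta$ with respect to the inflation $K/U_1 \rightarrowtail X/U_1$ and its dual deflation.
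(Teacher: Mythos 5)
Your proposal is correct and takes essentially the same approach as the paper: both reduce (i) to $\pi \circ i = 0$ by computing the isotropy map as an anti-diagonal matrix whose entries are $P(\pi\circ i)$ and $\Theta_{U_2}\circ\pi\circ i$, both compute $(U_1 \oplus P(U_2))^\perp \simeq \ker\pi \oplus P(X/U_1)$ using the exactness of $P$ applied to $U_1 \rightarrowtail X \twoheadrightarrow X/U_1$, and both pass to the quotient componentwise to obtain the underlying object $H(\ker\pi / U_1)$. The only differences are organizational — you reduce all three conditions directly to $\pi\circ i=0$ via the universal properties of $\ker\pi$ and $X/U_1$, whereas the paper proves (ii) $\Rightarrow$ (iii) via the pushout property of the conflation — and that you make explicit a point the paper leaves implicit, namely that the induced maps $U_1 \to \ker\pi$ and $X/U_1 \to U_2$ are an inflation and deflation (a cancellation property) and that the induced form on the reduction is genuinely hyperbolic, which the paper dismisses with "the claimed isometry follows."
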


\begin{proof}
The composition
\[
U_1 \oplus P(U_2) \xrightarrowtail[]{\footnotesize \left(\begin{smallmatrix}
i & 0 \\
0 & P(\pi)
\end{smallmatrix} \right)} H(X) \xrightarrow[]{\psi_{H(X)}} P(H(X)) \xtwoheadrightarrow[]{\footnotesize \left(\begin{smallmatrix}
P(i) & 0 \\
0 & P^2(\pi)
\end{smallmatrix} \right)} P(U_1) \oplus P^2(U_2)
\]
is equal to
$\left(\begin{smallmatrix}
0 & P(\pi \circ i) \\
P^2(\pi) \circ \Theta_X \circ i & 0
\end{smallmatrix} \right)
$
and, since $P^2(\pi) \circ \Theta_X = \Theta_{U_2} \circ \pi$, is zero if and only if $\pi \circ i=0$. It follows that (i) and (ii) are equivalent. Suppose that (ii) holds, so that
\[
\begin{tikzpicture}[baseline= (a).base]
\node[scale=1] (a) at (0,0){
\begin{tikzcd}[column sep=2.0em, row sep=2.0em]
U \arrow[tail,dashed]{dr}[below left]{i^{\prime}} \arrow[tail]{r}[above]{i} & X  \arrow[two heads]{r}[above]{\pi} & U_1 \\
{} & \ker \pi . \arrow[tail]{u} & {}
\end{tikzcd}
};
\end{tikzpicture}
\]
Then the diagram
\[
\begin{tikzpicture}[baseline= (a).base]
\node[scale=1] (a) at (0,0){
\begin{tikzcd}[column sep=2.0em, row sep=2.0em]
U_1 \arrow[two heads]{d} \arrow[tail]{r}[above]{i} & X \arrow[two heads]{d} \arrow[two heads]{ddr}[above right]{\pi}& {}\\
0 \arrow[tail]{r} \arrow[tail]{drr}& X \slash U_1 & {} \\
{} & {} & U_2
\end{tikzcd}
};
\end{tikzpicture}
\]
commutes. Since the square is a pushout, the universal property yields the desired factorization of $\pi$, showing that (iii) holds. The proof that (iii) implies (ii) is similar.

Turning to the final statement, assume that $U_1 \oplus P(U_2) \subset H(X)$ is isotropic. A direct calculation shows that
\[
(U_1 \oplus P(U_2))^{\perp} \simeq \ker(X \twoheadrightarrow U_2) \oplus P(X \slash U_1).
\]
The inflation $U_1 \oplus P(U_2) \rightarrowtail (U_1 \oplus P(U_2))^{\perp}$ is determined by the maps from (ii) and (iii). The claimed isometry follows.
\end{proof}

\begin{Lem}
\label{lem:noIsoAssump}
Let $\mathcal{A}$ be a combinatorial proto-exact category with duality. Let $M_1$ and $M_2$ be symmetric forms in $\mathcal{A}$ and let $i: U \rightarrowtail M_1 \oplus M_2$ be isotropic. Then $i$ factors through isotropic inflations $i_k: U_k \rightarrowtail M_k$, $k=1,2$.
\end{Lem}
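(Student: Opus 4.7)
The plan is to apply the combinatorial property of $\mathcal{A}$ directly to the inflation $i : U \rightarrowtail M_1 \oplus M_2$. This immediately produces admissible subobjects $i_k: U_k \rightarrowtail M_k$ together with an isomorphism $f: U \xrightarrow{\sim} U_1 \oplus U_2$ satisfying $(i_1 \oplus i_2) \circ f = i$, giving the desired factorization of $i$ through inflations into the $M_k$. It then suffices to show that each $i_k$ is itself isotropic, that is, $P(i_k) \circ \psi_{M_k} \circ i_k = 0$ and $U_k \rightarrowtail U_k^{\perp}$ is an inflation.

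The algebraic part of isotropy follows from a block-diagonal computation. Since $P$ is additive and $\psi_{M_1 \oplus M_2} = \psi_{M_1} \oplus \psi_{M_2}$, conjugating $P(i) \circ \psi_{M_1 \oplus M_2} \circ i$ by $f$ and $P(f)$ yields a morphism $U_1 \oplus U_2 \to P(U_1) \oplus P(U_2)$ whose only potentially nonzero entries are the diagonal blocks $P(i_k) \circ \psi_{M_k} \circ i_k$. The isotropy of $i$ forces the whole expression, and hence each block, to vanish.

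For the remaining condition I would identify the orthogonal $U^{\perp}$ inside $M_1 \oplus M_2$ with $U_1^{\perp} \oplus U_2^{\perp}$, where $U_k^{\perp} = \ker(P(i_k) \circ \psi_{M_k}) \rightarrowtail M_k$ exists as an inflation by proto-exact axiom (iv). That the kernel of the block-diagonal deflation $P(i) \circ \psi_{M_1 \oplus M_2}$ is the direct sum of the componentwise kernels can be verified by a pullback argument using the proto-exact axioms and the additivity of $P$. Under this identification, the inflation $U \rightarrowtail U^{\perp}$ becomes the diagonal morphism whose components are the maps $j_k : U_k \to U_k^{\perp}$ supplied by the universal property of the kernel together with the algebraic condition already established.

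I expect the main obstacle to be showing that each $j_k$ is itself an inflation rather than merely a morphism, since proto-exact categories do not admit an automatic ``two out of three'' property for inflations. To circumvent this I would reapply the combinatorial property, now to the admissible subobject $U \rightarrowtail U_1^{\perp} \oplus U_2^{\perp}$: this yields a decomposition $U \simeq W_1 \oplus W_2$ with admissible subobjects $W_k \rightarrowtail U_k^{\perp}$. Composing with the inflations $U_k^{\perp} \rightarrowtail M_k$ produces a second admissible decomposition of $i$ in $M_1 \oplus M_2$, and matching it with the original one identifies $W_k$ with $U_k$ up to isomorphism, from which it follows that $U_k \rightarrowtail U_k^{\perp}$ is indeed an inflation and each $i_k$ is isotropic.
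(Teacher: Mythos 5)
Your treatment of the algebraic half of the isotropy condition---conjugating $P(i) \circ \psi_{M_1 \oplus M_2} \circ i$ by $f$ and $P(f)$ to obtain a block-diagonal morphism and reading off the vanishing of each $P(i_k) \circ \psi_k \circ i_k$ from the isotropy of $i$---is exactly the paper's argument. Where you and the paper diverge is the second half of the definition of isotropic, namely that $U_k \hookrightarrow U_k^\perp$ be an inflation: the paper's proof stops after verifying the algebraic vanishing and does not address this clause at all, whereas you correctly flag it as a genuine requirement and attempt to supply it.

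Your argument for the inflation condition, however, has a gap at the matching step. Applying the combinatorial axiom to $U \rightarrowtail U_1^\perp \oplus U_2^\perp$ gives \emph{some} decomposition $U \simeq W_1 \oplus W_2$ with inflations $W_k \rightarrowtail U_k^\perp$; but the axiom asserts only the existence of such a decomposition, not its uniqueness, so one cannot immediately conclude that $W_k \simeq U_k$ as subobjects of $M_k$. Without that identification the established vanishing $P(i_k) \circ \psi_k \circ i_k = 0$ does not transfer to the composite $W_k \rightarrowtail U_k^\perp \rightarrowtail M_k$, since a general admissible subobject of $U_k^\perp$ need not be isotropic (indeed $U_k^\perp$ itself is not). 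In $\Rep_{\mathbb{F}_1}(Q)$ the combinatorial decomposition is canonical---one has $U_k = i(U) \cap M_k$, cf.\ equation \eqref{eq:intersection}---and there inflations coincide with injective morphisms, so the inflation clause is automatic anyway. In full generality, though, your argument needs either a uniqueness property for the combinatorial decomposition or a separate argument for the inflation clause. Note that the paper itself does not engage with this point, so your proposal, while not airtight, is more careful than the published proof.
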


\begin{proof}
Consider the commutative diagram
\[
\begin{tikzpicture}[baseline= (a).base]
\node[scale=0.9] (a) at (0,0){
\begin{tikzcd}[column sep=3.0em, row sep=3.0em]
U \arrow{d}[left]{f} \arrow[tail]{r}[above]{i} & M_1 \oplus M_2  \arrow{r}[above]{\psi_1 \oplus \psi_2} & P(M_1) \oplus P(M_2) \arrow[two heads]{r}[above]{P(i)} \arrow[two heads]{rd}[below left]{P(i_1) \oplus P(i_2)} & P(U)\\
U_1 \oplus U_2 \arrow[tail]{ur}[below right]{i_1 \oplus i_2} & {}  & {} & P(U_1) \oplus P(U_2) \arrow{u}[right]{P(f)},
\end{tikzcd}
};
\end{tikzpicture}
\]
where the isomorphism $f$ is that guaranteed by the combinatorial assumption on $\mathcal{A}$. The composition $P(i) \circ (\psi_1 \oplus \psi_2) \circ i$ is zero by assumption. It follows that
\[
(P(i_1) \oplus P(i_2)) \circ (\psi_1 \oplus \psi_2) \circ (i_1 \oplus i_2) = (P(i_1) \circ \psi_1 \circ i_1 \oplus P(i_2) \circ \psi_2 \circ i_2),
\]
and hence $P(i_k) \circ \psi_k \circ i_k$, $k=1,2$, is also zero.
\end{proof}

\subsection{Vector spaces over \texorpdfstring{$\mathbb{F}_1$}{}}
\label{sec:f1VectSpace}

Let $\Vect_{\mathbb{F}_1}$ be the  category of finite dimensional vector spaces over $\mathbb{F}_1$. For a detailed discussion, see \cite{szczesny2012}. Objects of $\Vect_{\mathbb{F}_1}$ are finite pointed sets, with basepoint denoted by $*$. Morphisms are basepoint preserving maps $f:V \rightarrow W$ for which the restriction $f\vert_{V \backslash f^{-1}(*)}$ is injective.  The category $\Vect_{\mathbb{F}_1}$ admits the structure of a proto-exact category, inflations being the injective morphisms and deflations being the surjective morphisms for which the pre-image of any non-basepoint is a singleton. The zero object is $0=\{*\}$.

Given an inflation $U \rightarrowtail V$, its cokernel $V \slash U$ is obtained from $V$ by collapsing $U$ to $* \in V$. If $U^{\prime} \rightarrowtail V$ is a second inflation, then $U \cap U^{\prime} \rightarrowtail V$ is the inflation given by the set theoretic intersection of $U$ and $U^{\prime}$ in $V$. The coproduct $V \oplus V^{\prime}$ is defined to be the disjoint union of $V$ and $V^{\prime}$ modulo the identification of their basepoints. Contrary to the case of vector spaces over a field, note that if $U \rightarrowtail V \oplus V^{\prime}$, then
\begin{equation}
\label{eq:intersection}
U \simeq (U \cap V) \oplus (U \cap V^{\prime}).
\end{equation}
The dimension of $V \in \Vect_{\mathbb{F}_1}$ is $\dim_{\mathbb{F}_1} V = \vert V \vert -1$.

The category $\Vect_{\mathbb{F}_1}$ has an exact duality $(-)^{\vee} = \Hom_{\Vect_{\mathbb{F}_1}}(-, \{*, 1\})$. For each $V \in \Vect_{\mathbb{F}_1}$, there is a canonical isomorphism $V \xrightarrow[]{\sim} V^{\vee}$, $v \mapsto \delta_v$, where
\[
\delta_v(v^{\prime})  = 
\begin{cases}
1 & \mbox{ if } v= v^{\prime}, \\
* & \mbox{ if } v\neq v^{\prime}.
\end{cases}
\]
Under this identification, $(-)^{\vee}$ squares to the identity. The triple $(\Vect_{\mathbb{F}_1}, (-)^{\vee}, 1_{\id_{\Vect}})$ is therefore a proto-exact category with strict duality.

\subsection{Representations of quivers}
\label{sec:quiverReps}

For background on the representation theory of quivers, see \cite{schiffmann2012b} and \cite{szczesny2012} over fields and $\mathbb{F}_1$, respectively. For symmetric forms on quiver representations over fields, see \cite{derksen2002}, \cite{mbyoung2016}.

Throughout this section, we denote by $k$ either $\mathbb{F}_1$ or a field whose characteristic is different from two.

Let $\Rep_k(Q)$ be the category of finite dimensional representations of a quiver $Q=(Q_0,Q_1)$ in $\Vect_k$. Explicitly, a representation is a pair
\[
(U=\bigoplus_{i \in Q_0} U_i ,\{u_{\alpha}\}_{\alpha \in Q_1})
\]
consisting of a finite dimensional $Q_0$-graded vector space $U$ over $k$ and a $k$-linear morphism $u_{\alpha}: U_i \rightarrow U_j$ for each arrow $\alpha: i \rightarrow j$ of $Q$. The category $\Rep_k(Q)$ inherits a proto-exact structure from $\Vect_k$, which is abelian if $k$ is a field.

Let $\Lambda_Q = \mathbb{Z} Q_0$ and $\Lambda_Q^+ = \mathbb{Z}_{\geq 0} Q_0$. The dimension vector map $\mathbf{dim}_k: K_0(\Rep_k(Q)) \rightarrow \Lambda_Q$ is a surjective group homomorphism.

A contravariant involution $\sigma$ of $Q$ is the data of involutions of $Q_0$ and $Q_1$, both denoted by $\sigma$, with the property that if $\alpha: i \rightarrow j$ is an arrow in $Q$, then $\sigma(\alpha)$ is an arrow $\sigma(j) \rightarrow \sigma(i)$. When $k$ is a field, we also fix functions
\[
s: Q_0 \rightarrow \{ \pm 1 \}, \qquad
\tau: Q_1 \rightarrow \{ \pm 1 \}
\]
such that $s$ is $\sigma$-invariant and $\tau_{\alpha} \tau_{\sigma(\alpha)} = s_i s_j$ for all arrows $\alpha: i \rightarrow j$. This data induces an exact functor $P: \Rep_k(Q)^{\op} \rightarrow \Rep_k(Q)$, given on objects by
\[
P(U)_i =U_{\sigma(i)}^{\vee},
\qquad
P(u)_{\alpha}  = 
\begin{cases}
u_{\sigma(\alpha)}^{\vee} & \mbox{ if } k = \mathbb{F}_1, \\
\tau_{\alpha} u_{\sigma(\alpha)}^{\vee} & \mbox{ if } k \neq \mathbb{F}_1
\end{cases}
\]
and on morphisms by
\[
P(V \xrightarrow[]{\{\phi_i \}_{i \in Q_0}} W) = P(W) \xrightarrow[]{\{ \phi_{\sigma(i)}^{\vee}\}_{i \in Q_0}} P(V).
\]
Set also
\[
\Theta_U =
\begin{cases}
\id_U & \mbox{ if } k = \mathbb{F}_1,\\
\bigoplus_{i \in Q_0} s_i \cdot \ev_{U_i} & \mbox{ if } k \neq \mathbb{F}_1
\end{cases}
\]
with $\ev_{U_i}: U_i \rightarrow U_i^{\vee \vee}$ the evaluation isomorphism. Then $(\Rep_k(Q), P, \Theta)$ is a proto-exact category with duality. Symmetric quiver representations are by definition symmetric forms in $\Rep_k(Q)$.

When $k= \mathbb{F}_1$, coproducts and intersections of representations are formed by applying the definitions of Section \ref{sec:f1VectSpace} pointwise in $Q_0$.

\begin{Lem}
The category $\Rep_{\mathbb{F}_1}(Q)$ is combinatorial, has the finite direct decomposition property and satisfies the Reduction Assumption.
\end{Lem}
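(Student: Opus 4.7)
The plan is to handle the three properties separately, each exploiting the pointed-set nature of $\mathbb{F}_1$-representations.

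For the combinatorial property, given an inflation $i : U \rightarrowtail X_1 \oplus X_2$, I would apply the $\mathbb{F}_1$-vector space decomposition \eqref{eq:intersection} at each vertex $j \in Q_0$ to obtain $U_j \simeq (U_j \cap (X_1)_j) \oplus (U_j \cap (X_2)_j)$. Setting $U^{(k)}_j = U_j \cap (X_k)_j$, the arrow maps of $X_1 \oplus X_2$, being block diagonal with respect to this splitting, preserve the decomposition, so each $U^{(k)}$ is a subrepresentation of $X_k$ and the resulting isomorphism $U \simeq U^{(1)} \oplus U^{(2)}$ is compatible with $i$. The finite direct decomposition property should then follow immediately: any $W \in \Rep_{\mathbb{F}_1}(Q)$ has finite underlying pointed set, hence only finitely many sub-pointed-sets and a fortiori only finitely many subrepresentations, so only finitely many ordered pairs $(U,V)$ can determine a decomposition $W \simeq U \oplus V$.

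For the Reduction Assumption I would perform a diagram chase. Given an isotropic inflation $i : U \rightarrowtail N$ with orthogonal $k : U^\perp \rightarrowtail N$ and quotient $\pi : U^\perp \twoheadrightarrow N \git U$, set $\alpha = P(k) \circ \psi_N \circ k : U^\perp \to P(U^\perp)$. The defining property $U^\perp = \ker(P(i) \circ \psi_N)$, combined with exactness of $P$ applied to the conflation $U \rightarrowtail U^\perp \twoheadrightarrow N \git U$, should yield a factorization $\alpha = P(\pi) \circ \beta$ for a unique $\beta : U^\perp \to P(N \git U)$. To show that $\beta$ vanishes on the inflation $U \rightarrowtail U^\perp$, I would apply $P$ to the composition and use the symmetry identity $P(\psi_N) \circ \Theta_N = \psi_N$ together with naturality of $\Theta$ to reduce the vanishing to the same kernel property; faithfulness of $P$, which follows from $P \circ P^{\op} \simeq \id_{\mathcal{A}}$, then gives the desired result. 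The induced $\psi_{N \git U}$ is the unique map making the reduction diagram commute, and verifying that it is an isomorphism satisfying $P(\psi_{N \git U}) \circ \Theta_{N \git U} = \psi_{N \git U}$ is a concrete check at the level of $Q_0$-graded pointed sets.

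The hardest step will be the Reduction Assumption: the existence of an induced form on the reduction is not automatic in the proto-exact setting, where Quebbemann's argument for exact categories does not directly apply, and requires the diagram chase sketched above. The other two properties should follow essentially immediately from the $\mathbb{F}_1$-vector space structure.
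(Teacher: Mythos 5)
Your arguments for the combinatorial property and for the finite direct decomposition property match the paper in substance: both come down to the pointwise (in $Q_0$) identification \eqref{eq:intersection} and the finiteness of the underlying pointed sets, and your observation that the arrow maps of $X_1 \oplus X_2$ are block diagonal is exactly the point the paper's one-line proof is suppressing.

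For the Reduction Assumption, however, your route diverges from the paper's, and the difference is worth spelling out. You propose a general diagram chase: restrict $\psi_N$ to $U^\perp$, factor the resulting map through $P(\pi)$ using the universal property of the kernel in the dualized conflation $P(N\git U) \rightarrowtail P(U^\perp) \twoheadrightarrow P(U)$, kill the restriction to $U$ via symmetry of $\psi_N$ plus faithfulness of $P$, and then factor through the cokernel $\pi$. This chase is sound at the level of existence and uniqueness of the morphism $\psi_{N\git U}$, since conflations in a proto-exact category do carry the kernel/cokernel universal properties as pullbacks and pushouts. But — as you yourself flag — the chase does not by itself show that $\psi_{N\git U}$ is an \emph{isomorphism}; that is precisely the content the Reduction Assumption is designed to encode, and it is the reason the paper treats it as an assumption rather than a theorem for general proto-exact categories. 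You are thus left with a residual concrete verification at the level of pointed sets. The paper skips the abstract nonsense entirely and goes straight to the combinatorics: after reducing to $\Vect_{\mathbb{F}_1}$, it identifies symmetric forms on $N$ with involutions $\pi \in \mathfrak{S}_{\dim_{\mathbb{F}_1} N}$ (isometry classes with conjugacy classes), computes $U^\perp = \{x \in N \mid \pi(x) \notin i(U)\}$, characterizes isotropy as $U \cap \pi(U) = *$, and defines the reduced form by deleting the $2$-cycles of $\pi$ that meet $i(U)$ — an explicit construction whose nondegeneracy and uniqueness up to conjugacy are visible at a glance. Your approach is more portable to other proto-exact categories with duality, while the paper's is shorter and also yields an explicit permutation model of $\Vect_{\mathbb{F}_1, \textnormal{h}}$ that is useful elsewhere. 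Both are correct, but if you pursue your version you should carry out the concrete isomorphism check rather than deferring it, since that check is not a formality: it is the actual substance being asserted.
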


\begin{proof}
The first statement follows from equation \eqref{eq:intersection} and the fact that coproducts in $\Rep_{\mathbb{F}_1}(Q)$ are pointwise (in $Q_0$) coproducts in $\Vect_{\mathbb{F}_1}$. The second statement follows similarly from the finite direct decomposition property of $\Vect_{\mathbb{F}_1}$.

To prove that $\Rep_{\mathbb{F}_1}(Q)$ satisfies the Reduction Assumption, it suffices to show that the same is true of $\Vect_{\mathbb{F}_1}$. Observe that since objects of $\Vect_{\mathbb{F}_1}$ are canonically self-dual, the set of symmetric forms on $N \in \Vect_{\mathbb{F}_1}$ is in natural bijection with those $\pi \in \Aut_{\Vect_{\mathbb{F}_1}}(N) \simeq \mathfrak{S}_{\dim_{\mathbb{F}_1} N}$ which satisfy $\pi^2=e$. Moreover, $\pi$ and $\pi^{\prime}$ correspond to isometric symmetric forms if and only if they are conjugate. For a subobject $i : U \rightarrowtail (N, \psi_{\pi}),$ we find
\[
U^{\perp} = \{ x \in N \mid \pi(x) \not\in i(U) \}.
\]
Hence, $U$ is isotropic if and only if $U \cap \pi(U) = *$, in which case a symmetric form on $U^{\perp} \slash U$ is defined by deleting from $\pi$ all $2$-cycles in which an element of $i(U)$ appears. It is clear that this symmetric form is unique up to isometry.
\end{proof}

\section{The combinatorial case}
\label{sec:combCase}

\subsection{The Hall algebra of a proto-exact category}
\label{sec:hallAlg}

We recall some material about finitary Hall algebras. For further details, see \cite{schiffmann2012b}, \cite{dyckerhoff2012b}, \cite{szczesny2012}.

A proto-exact category $\mathcal{A}$ is called finitary if it is essentially small and the sets
\[
\Hom_{\mathcal{A}}(U,V), \qquad \Ext^1_{\mathcal{A}}(U,V)
\]
are finite for all $U,V \in \mathcal{A}$. Here $\Ext^1_{\mathcal{A}}(U,V)$ is defined via a Yoneda-type construction, as conflations up to equivalence. Let $\Iso(\mathcal{A})$ be the set of isomorphism classes of objects of $\mathcal{A}$. The Grothendieck group $K_0(\mathcal{A})$ is the quotient of the free abelian group generated by $\Iso(\mathcal{A})$ by the subgroup generated by the relations $[W] = [U] + [V]$ whenever $U \rightarrowtail W \twoheadrightarrow V$ is a conflation.

The Hall algebra of a finitary proto-exact category $\mathcal{A}$ is the rational vector space with basis $\Iso(\mathcal{A})$,
\[
\mathcal{H}_{\mathcal{A}} = \bigoplus_{U \in \Iso(\mathcal{A})} \mathbb{Q} \cdot [U].
\]
The associative product is
\[
[U] \cdot [V] = \sum_{W \in \Iso(\mathcal{A})} F^W_{U,V} [W].
\]
Here $F^W_{U,V} \in \mathbb{Z}$ is the cardinality of the set
\[
\mathcal{F}^W_{U,V} = \{ \widetilde{U} \overset{\textnormal{admiss.}}{\subset} W \mid \widetilde{U} \simeq U, \;\; W \slash \widetilde{U} \simeq V\},
\] 
which is finite since $\mathcal{A}$ is finitary. The algebra unit $1_{\mathcal{H}}$ is $[0]$. The algebra $\mathcal{H}_{\mathcal{A}}$ is canonically graded by the submonoid $K_0(\mathcal{A})^+ \subset K_0(\mathcal{A})$ of classes of objects.

Assume now that $\mathcal{A}$ admits finite coproducts. Let $\mathcal{H}_{\mathcal{A}} \widehat{\otimes}_{\mathbb{Q}} \mathcal{H}_{\mathcal{A}}$ be the vector space of formal (possibly infinite) linear combinations $\sum_{U,V} c_{U,V} [U] \otimes [V]$. Define a $\mathbb{Q}$-linear map $\Delta: \mathcal{H}_{\mathcal{A}} \rightarrow \mathcal{H}_{\mathcal{A}} \widehat{\otimes}_{\mathbb{Q}} \mathcal{H}_{\mathcal{A}}$ by the formula
\[
\Delta ([W]) = \sum_{\substack{(U,V) \in \Iso(\mathcal{A})^{\times 2} \\ U \oplus V \simeq W}} [U] \otimes [V].
\]
This makes $\mathcal{H}_{\mathcal{A}}$ into a cocommutative topological coalgebra. If $\mathcal{A}$ satisfies the finite direct decomposition property, then the image of $\Delta$ lies in $\mathcal{H}_{\mathcal{A}} \otimes_{\mathbb{Q}} \mathcal{H}_{\mathcal{A}}$ and $\mathcal{H}_{\mathcal{A}}$ is a coalgebra. We remark that $\Delta$ is not the standard Hall coproduct; see the remarks at the end of Section \ref{sec:hallModBackground}.

\begin{Rem}
For later comparison, observe that $\mathcal{H}_{\mathcal{A}}$ can be identified with the space of finitely supported functions $\Iso(\mathcal{A}) \rightarrow \mathbb{Q}$. From this point of view, the product and coproduct are given by
\[
(f_1 \cdot f_2) (W) = \sum_{U \subset W} f_1(U) f_2(W \slash U), \qquad \Delta(f)(U,V) = f(U \oplus V),
\]
respectively.
\end{Rem}

The relevance of the combinatorial property to Hall algebras is the following.

\begin{Thm}[{\cite[Theorem 2.4]{eppolito2018}}]
\label{thm:protoGreen}
Let $\mathcal{A}$ be a finitary combinatorial proto-exact category. Then $(\mathcal{H}_{\mathcal{A}}, \cdot, \Delta)$ is a topological cocommutative bialgebra.
\end{Thm}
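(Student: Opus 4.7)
The plan is to verify the three standard bialgebra axioms for $(\mathcal{H}_\mathcal{A}, \cdot, \Delta)$. Cocommutativity of $\Delta$ is immediate from the symmetry of $\oplus$: each pair $(U,V)$ with $U \oplus V \simeq W$ appears on equal footing with $(V,U)$. Coassociativity follows similarly, since both iterates $(\Delta \otimes \id) \circ \Delta$ and $(\id \otimes \Delta) \circ \Delta$ applied to $[W]$ yield the sum over pairs of direct-sum decompositions $U_1 \oplus U_2 \oplus U_3 \simeq W$. Counit compatibility reduces to $F^0_{X,Y} = \delta_{X,0}\delta_{Y,0}$, which is clear. The main content is the algebra/coalgebra compatibility $\Delta([X]\cdot [Y]) = \Delta([X])\cdot \Delta([Y])$, interpreted on the completed tensor square in the usual topological way.

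To prove this compatibility, I would compare coefficients of $[U] \otimes [V]$ on both sides. On the left, only the term $W$ with $[W] = [U] + [V]$ contributes, giving coefficient $F^{U \oplus V}_{X,Y}$. On the right, expanding the tensor product of coproducts and using the definition of the Hall product, the coefficient of $[U] \otimes [V]$ is
\[
\sum_{\substack{X_1 \oplus X_2 \simeq X \\ Y_1 \oplus Y_2 \simeq Y}} F^U_{X_1, Y_1}\, F^V_{X_2, Y_2},
\]
with the sums ranging over iso classes of summands. So it suffices to construct a bijection between the set of admissible $X' \rightarrowtail U \oplus V$ with $X' \simeq X$ and $(U \oplus V)/X' \simeq Y$, and the set of pairs $(X_1' \rightarrowtail U,\; X_2' \rightarrowtail V)$ of admissibles (the indexing by direct-sum decompositions of $X$ and $Y$ being recovered post-hoc from the iso classes of the pieces and their quotients).

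The combinatorial hypothesis on $\mathcal{A}$ provides the forward map: every inflation $X' \rightarrowtail U \oplus V$ factors as $X' \xrightarrow{\sim} X_1' \oplus X_2' \xrightarrow{i_1 \oplus i_2} U \oplus V$ with $i_k$ an inflation into the $k$-th summand, so $X_1 := [X_1'],\; X_2 := [X_2']$ satisfy $X_1 \oplus X_2 \simeq X' \simeq X$. The coproduct of the two conflations $X_k' \rightarrowtail U_k \twoheadrightarrow U_k/X_k'$ (where $U_1 = U$, $U_2 = V$) is itself a conflation, giving the canonical identification $(U \oplus V)/X' \simeq U/X_1' \oplus V/X_2'$, from which one extracts $Y_1, Y_2$ with $Y_1 \oplus Y_2 \simeq Y$. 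The inverse map simply takes the coproduct of two admissibles. The main obstacle is verifying that this forward assignment is well-defined on iso classes and that the quotient really splits as claimed in the proto-exact setting --- essentially, one must check that coproducts of conflations are conflations and that the combinatorial factorization is unique up to the equivalences that define admissible subobjects. Once the bijection is established at the level of counted sets, matching coefficients is immediate, and the topological issue evaporates since the comparison is performed one pair $[U] \otimes [V]$ at a time.
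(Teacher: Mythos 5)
The paper does not prove this theorem; it is quoted from the reference \cite[Theorem 2.4]{eppolito2018}, so there is no in-paper proof to compare against. Your strategy is the natural one and almost certainly agrees in outline with the cited proof: reduce the compatibility $\Delta(u\cdot v)=\Delta(u)\cdot\Delta(v)$ to the numerical identity $F^{U\oplus V}_{X,Y}=\sum F^U_{X_1,Y_1}F^V_{X_2,Y_2}$ and establish the latter via a bijection between $\mathcal{F}^{U\oplus V}_{X,Y}$ and the set of pairs $(X_1'\subset U,\ X_2'\subset V)$ of admissible subobjects with the right isomorphism types, the quadruple $(X_1,X_2,Y_1,Y_2)$ being read off from the pair. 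The cocommutativity, coassociativity, counit, and coefficient computations are all correct, and the topological issue is indeed handled pointwise as you say.

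The gap, which you acknowledge but do not close, is precisely the bijectivity. The combinatorial hypothesis is pure existence: it asserts that every admissible $X'\rightarrowtail U\oplus V$ factors through some $X_1'\oplus X_2'$. It does not assert that the pair of admissible subobjects $(X_1'\subset U,\ X_2'\subset V)$ is uniquely determined by $X'$, which is what you need for your forward map $\Phi$ to be well-defined, and is equivalent to the injectivity of the inverse map $\Psi:(X_1',X_2')\mapsto X_1'\oplus X_2'$. In $\Rep_{\mathbb{F}_1}(Q)$ this uniqueness is immediate from equation \eqref{eq:intersection}, since the components are recovered by intersection, but a general proto-exact category does not provide intersections of inflations along inflations among its axioms. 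Second, your identification $(U\oplus V)/X'\simeq U/X_1'\oplus V/X_2'$ presupposes that coproducts of conflations are conflations---in particular that $i_1\oplus i_2$ is again an inflation---which is also not a proto-exact axiom and must be assumed or derived. Both points are likely standing hypotheses or short lemmas in \cite{eppolito2018}, but as written your argument is a plan rather than a complete proof; the two outstanding lemmas are exactly where the hypotheses on $\mathcal{A}$ do real work.
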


For simplicity, we will henceforth assume that $\mathcal{A}$ has the finite direct decomposition property, so that $\mathcal{H}_Q$ is an honest bialgebra.

An element $u \in \mathcal{H}_{\mathcal{A}}$ is called primitive if it satisfies
\[
\Delta (u) = u \otimes 1_{\mathcal{H}} + 1_{\mathcal{H}} \otimes u.
\]
It follows from the definition of $\Delta$ that the set of isomorphism classes of indecomposable objects of $\mathcal{A}$ is a basis of the vector space $V_{\mathcal{A}}^{\prim}$ of primitive elements of $\mathcal{H}_{\mathcal{A}}$. The commutator bracket gives $V_{\mathcal{A}}^{\prim}$ the structure of a $K_0(\mathcal{A})^+$-graded pro-nilpotent Lie algebra, the universal enveloping algebra of which is denoted by $\mathcal{U}(V^{\prim}_{\mathcal{A}})$.

Since $\mathcal{H}_{\mathcal{A}}$ is a connected $K_0(\mathcal{A})^+$-graded cocommutative bialgebra, the Milnor--Moore theorem implies the following result.

\begin{Thm}[{\cite[Theorem 6.1]{szczesny2012}}]
\label{thm:envelHall}
The canonical map $\mathcal{U}(V^{\prim}_{\mathcal{A}}) \rightarrow \mathcal{H}_{\mathcal{A}}$ is a bialgebra isomorphism.
\end{Thm}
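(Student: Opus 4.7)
The plan is to deduce this from the classical Milnor--Moore theorem, which asserts that any connected graded cocommutative bialgebra over a field of characteristic zero is canonically isomorphic to the universal enveloping algebra of its Lie algebra of primitive elements. Everything amounts to checking that $\mathcal{H}_{\mathcal{A}}$ satisfies the hypotheses of that theorem and that the canonical map alluded to in the statement is precisely the map provided by Milnor--Moore.

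First I would recall the bialgebra structure. Theorem \ref{thm:protoGreen}, together with the standing finite direct decomposition assumption, already says that $(\mathcal{H}_{\mathcal{A}}, \cdot, \Delta)$ is a cocommutative bialgebra. The $K_0(\mathcal{A})^+$-grading induced by the degrees $[U] \in K_0(\mathcal{A})^+$ is compatible with both $\cdot$ and $\Delta$: the product preserves the grading since the existence of a conflation $\widetilde U \rightarrowtail W \twoheadrightarrow V$ with $\widetilde U \simeq U$ forces $[W] = [U] + [V]$, while the coproduct preserves it because $U \oplus V \simeq W$ gives the same relation by applying the conflation axiom to $U \rightarrowtail U \oplus V \twoheadrightarrow V$. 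The grading is connected in the sense that the degree $0$ component is spanned by $[0]$; indeed any object of class $0 \in K_0(\mathcal{A})^+$ is isomorphic to $0$ (being both a subobject and quotient of $0$ by a routine argument using conflations). Consequently $\mathcal{H}_{\mathcal{A}}$ is a connected $K_0(\mathcal{A})^+$-graded cocommutative bialgebra, and therefore automatically a Hopf algebra (the antipode being constructible by induction along the grading).

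Next I would identify the primitives and set up the map. By the definition of $\Delta$, an element $u \in \mathcal{H}_{\mathcal{A}}$ is primitive precisely when its expansion in the basis $\Iso(\mathcal{A})$ is supported on indecomposables, confirming that $V^{\prim}_{\mathcal{A}}$ has the stated basis. The commutator bracket makes $V^{\prim}_{\mathcal{A}}$ a $K_0(\mathcal{A})^+$-graded Lie algebra, and the universal property of $\mathcal{U}(V^{\prim}_{\mathcal{A}})$ gives a canonical bialgebra morphism $\mathcal{U}(V^{\prim}_{\mathcal{A}}) \to \mathcal{H}_{\mathcal{A}}$ extending the inclusion of primitives.

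Finally, I would invoke Milnor--Moore. Since $\mathbb{Q}$ has characteristic zero and $\mathcal{H}_{\mathcal{A}}$ is a connected $K_0(\mathcal{A})^+$-graded cocommutative Hopf algebra over $\mathbb{Q}$, the theorem applies and yields that the canonical map is an isomorphism of bialgebras. The only mildly nontrivial point, and the one I would pay most attention to, is that Milnor--Moore is classically formulated for $\mathbb{Z}_{\geq 0}$-graded Hopf algebras; here one chooses any monoid homomorphism $K_0(\mathcal{A})^+ \to \mathbb{Z}_{\geq 0}$ that is strictly positive on nonzero classes (for instance the total dimension), which upgrades the $K_0(\mathcal{A})^+$-grading to a compatible $\mathbb{Z}_{\geq 0}$-grading with the same connected degree-zero piece. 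This reduction is the main technical obstacle, but once established the result is immediate.
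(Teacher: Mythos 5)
Your proposal follows the paper's route exactly: the paper gives no argument beyond observing that $\mathcal{H}_{\mathcal{A}}$ is a connected $K_0(\mathcal{A})^+$-graded cocommutative bialgebra and invoking Milnor--Moore, which is precisely your plan, with the hypothesis-checking spelled out. The one imprecision is the parenthetical ``routine argument using conflations'' claiming that $[W]=0$ in $K_0(\mathcal{A})$ forces $W\simeq 0$---this is a relation in the Grothendieck group, not a statement that $W$ is a subquotient of $0$, and it is not a formal consequence of the proto-exact axioms; the correct source of connectedness (and of the $\mathbb{Z}_{\geq 0}$-grading needed for Milnor--Moore) is exactly the positive monoid homomorphism $K_0(\mathcal{A})^+ \to \mathbb{Z}_{\geq 0}$ you introduce in your final paragraph (total dimension, in the quiver and $\mathbb{F}_1$ settings the paper cares about), so you should derive the degree-zero statement from that rather than from a conflation argument.
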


\subsection{Hall algebras in the presence of a duality structure}
\label{sec:hallAlgDuality}

We continue to work in the setting of Section \ref{sec:hallAlg}. Suppose that $(P, \Theta)$ is an exact duality on $\mathcal{A}$. The Hall algebra then inherits a $\mathbb{Q}$-linear map
\[
P_{\mathcal{H}} : \mathcal{H}_{\mathcal{A}} \rightarrow \mathcal{H}_{\mathcal{A}}, \qquad 
[U] \mapsto [P(U)].
\]
The isomorphism $\id_{\mathcal{A}} \simeq P \circ P^{\op}$ implies that $P_{\mathcal{H}}$ squares to the identity. Since $P$ is contravariant and exact, we have the equalities
\[
F^W_{U,V} = F^{P(W)}_{P(V),P(U)}, \qquad U,V,W \in \mathcal{A}
\]
which imply that $P_{\mathcal{H}}$ is an algebra anti-homomorphism. As $P$ is additive, $P_{\mathcal{H}}$ is a coalgebra homomorphism.

\begin{Lem}
\label{lem:lieAlgAntiinv}
The map $P_{\mathcal{H}}: \mathcal{H}_{\mathcal{A}} \rightarrow \mathcal{H}_{\mathcal{A}}$ restricts to a Lie algebra anti-involution $P_{\mathcal{H}} : V_{\mathcal{A}}^{\prim} \rightarrow V_{\mathcal{A}}^{\prim}$.
\end{Lem}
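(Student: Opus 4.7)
The proof strategy has two parts: first show that $P_{\mathcal{H}}$ preserves $V_{\mathcal{A}}^{\prim}$, and then check the anti-involution property on this subspace using the fact, already established in the text, that $P_{\mathcal{H}}$ is an algebra anti-homomorphism, a coalgebra homomorphism, and squares to the identity.

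For the first step, I would take $u \in V_{\mathcal{A}}^{\prim}$ and apply $P_{\mathcal{H}}$ to the primitivity condition $\Delta(u) = u \otimes 1_{\mathcal{H}} + 1_{\mathcal{H}} \otimes u$. Since $P$ is additive one has $P(0) \simeq 0$, so $P_{\mathcal{H}}(1_{\mathcal{H}}) = 1_{\mathcal{H}}$, and since $P_{\mathcal{H}}$ is a coalgebra homomorphism,
\[
\Delta(P_{\mathcal{H}}(u)) = (P_{\mathcal{H}} \otimes P_{\mathcal{H}})\Delta(u) = P_{\mathcal{H}}(u) \otimes 1_{\mathcal{H}} + 1_{\mathcal{H}} \otimes P_{\mathcal{H}}(u),
\]
so $P_{\mathcal{H}}(u) \in V_{\mathcal{A}}^{\prim}$.

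For the second step, recall that the Lie bracket on $V_{\mathcal{A}}^{\prim}$ is the commutator inherited from $\mathcal{H}_{\mathcal{A}}$. Since $P_{\mathcal{H}}$ is an algebra anti-homomorphism, for $u,v \in V_{\mathcal{A}}^{\prim}$,
\[
P_{\mathcal{H}}([u,v]) = P_{\mathcal{H}}(uv) - P_{\mathcal{H}}(vu) = P_{\mathcal{H}}(v) P_{\mathcal{H}}(u) - P_{\mathcal{H}}(u) P_{\mathcal{H}}(v) = [P_{\mathcal{H}}(v), P_{\mathcal{H}}(u)],
\]
which is exactly the Lie anti-homomorphism condition. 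Finally, $P_{\mathcal{H}}^2 = \id$ holds on all of $\mathcal{H}_{\mathcal{A}}$ (as already noted, a consequence of the natural isomorphism $\Theta: \id_{\mathcal{A}} \Rightarrow P \circ P^{\op}$), hence in particular on $V_{\mathcal{A}}^{\prim}$, giving the involution property.

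There is no real obstacle: the statement is a formal consequence of the three properties of $P_{\mathcal{H}}$ established in the preceding paragraph together with the definition of $V_{\mathcal{A}}^{\prim}$ as the primitive subspace and the commutator Lie bracket structure. The only minor point to verify carefully is $P_{\mathcal{H}}(1_{\mathcal{H}}) = 1_{\mathcal{H}}$, which reduces to $P(0) \simeq 0$ from the definition of an exact duality.
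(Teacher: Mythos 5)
Your proof is correct and follows the same approach as the paper: use the coalgebra-homomorphism property of $P_{\mathcal{H}}$ to get stability of the primitive subspace, and the algebra-anti-homomorphism property to get the Lie anti-homomorphism condition via the commutator computation. You spell out the first step and the involution property a bit more explicitly than the paper, which simply cites those facts, but the argument is the same.
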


\begin{proof}
That $P_{\mathcal{H}}$ restricts to a linear map $V_{\mathcal{A}}^{\prim} \rightarrow V_{\mathcal{A}}^{\prim}$ follows from the fact that $P_{\mathcal{H}}$ is a coalgebra homomorphism. That this map is a Lie algebra anti-homomorphism follows from the fact that $P_{\mathcal{H}}$ is an algebra anti-homomorphism:
\[
P_{\mathcal{H}}([x,y]) = P_{\mathcal{H}}(xy) - P_{\mathcal{H}}(yx) = P_{\mathcal{H}}(y)P_{\mathcal{H}}(x) - P_{\mathcal{H}}(x) P_{\mathcal{H}}(y) = -[P_{\mathcal{H}}(x),P_{\mathcal{H}}(y)].
\]
\end{proof}

Denote by $V^{\prim,\pm}_{\mathcal{A}} \subset V^{\prim}_{\mathcal{A}}$ the subspace of $P_{\mathcal{H}}$-invariants ($+$) or $P_{\mathcal{H}}$-anti-invariants ($-$). There is a vector space splitting $V^{\prim}_{\mathcal{A}} = V^{\prim,+}_{\mathcal{A}} \oplus V^{\prim,-}_{\mathcal{A}}$.

\begin{Lem}
\label{lem:lieSubalg}
\begin{enumerate}[label=(\roman*)]
\item The subspace $V^{\prim,-}_{\mathcal{A}} \subset V^{\prim}_{\mathcal{A}}$ is a Lie subalgebra.

\item The subspace $V^{\prim,+}_{\mathcal{A}} \subset V^{\prim}_{\mathcal{A}}$ is naturally a representation of   $V^{\prim,-}_{\mathcal{A}}$.
\end{enumerate}
\end{Lem}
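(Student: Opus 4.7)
The plan is to deduce both statements directly from Lemma \ref{lem:lieAlgAntiinv}, namely from the fact that $P_{\mathcal{H}}$ is a Lie algebra anti-involution of $V^{\prim}_{\mathcal{A}}$. Since $P_{\mathcal{H}}^2 = \id$ and we are working over $\mathbb{Q}$ (char $\neq 2$), the decomposition $V^{\prim}_{\mathcal{A}} = V^{\prim,+}_{\mathcal{A}} \oplus V^{\prim,-}_{\mathcal{A}}$ into $\pm 1$ eigenspaces is well defined. Both parts then reduce to simple sign bookkeeping on the identity $P_{\mathcal{H}}([x,y]) = -[P_{\mathcal{H}}(x),P_{\mathcal{H}}(y)]$ established in the proof of Lemma \ref{lem:lieAlgAntiinv}.

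For (i), I would take $x,y \in V^{\prim,-}_{\mathcal{A}}$, so $P_{\mathcal{H}}(x) = -x$ and $P_{\mathcal{H}}(y) = -y$, and compute
\[
P_{\mathcal{H}}([x,y]) = -[P_{\mathcal{H}}(x),P_{\mathcal{H}}(y)] = -[-x,-y] = -[x,y],
\]
so $[x,y] \in V^{\prim,-}_{\mathcal{A}}$. This shows $V^{\prim,-}_{\mathcal{A}}$ is closed under the bracket, and since the Jacobi identity and antisymmetry are inherited from $V^{\prim}_{\mathcal{A}}$, it is a Lie subalgebra.

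For (ii), I would define the action of $x \in V^{\prim,-}_{\mathcal{A}}$ on $v \in V^{\prim,+}_{\mathcal{A}}$ by $x \cdot v := [x,v]$, viewed inside $V^{\prim}_{\mathcal{A}}$. The key check is that this lands in $V^{\prim,+}_{\mathcal{A}}$: with $P_{\mathcal{H}}(x) = -x$ and $P_{\mathcal{H}}(v) = v$,
\[
P_{\mathcal{H}}([x,v]) = -[P_{\mathcal{H}}(x),P_{\mathcal{H}}(v)] = -[-x,v] = [x,v],
\]
so $[x,v] \in V^{\prim,+}_{\mathcal{A}}$. The representation axiom $[x,y]\cdot v = x\cdot(y\cdot v) - y\cdot(x\cdot v)$ is then immediate from the Jacobi identity on $V^{\prim}_{\mathcal{A}}$.

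There is no real obstacle here: once the anti-involution property from Lemma \ref{lem:lieAlgAntiinv} is in hand, the argument is a one-line sign computation in each part, exactly in the spirit of the standard decomposition of a Lie algebra with involution into its fixed Lie subalgebra and a module complement.
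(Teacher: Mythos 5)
Your proof is correct and takes essentially the same approach as the paper: both parts reduce to the sign computation afforded by the anti-involution identity $P_{\mathcal{H}}([x,y]) = -[P_{\mathcal{H}}(x),P_{\mathcal{H}}(y)]$ from Lemma \ref{lem:lieAlgAntiinv}, with (ii) being the observation that $V^{\prim,+}_{\mathcal{A}}$ is stable under the adjoint action of $V^{\prim,-}_{\mathcal{A}}$. The paper leaves the sign bookkeeping implicit; you have simply written it out.
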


\begin{proof}
The first statement follows from a calculation similar to that from Lemma \ref{lem:lieAlgAntiinv}. The second statement follows from the observation that $V_{\mathcal{A}}^{\prim,+}$ is stable under the restriction of the adjoint action of $V_{\mathcal{A}}^{\prim}$ to the Lie subalgebra $V^{\prim,-}_{\mathcal{A}}$.
\end{proof}

\subsection{The Hall module of a proto-exact category with duality}
\label{sec:hallModBackground}

Let $\mathcal{A}$ be a finitary proto-exact category with duality which satisfies the Reduction Assumption. Let $GW_0(\mathcal{A})$ be the Grothendieck--Witt group of $\mathcal{A}$, that is, the free abelian group generated by the set $\Iso_{\textnormal{h}}(\mathcal{A})$ of isometry classes of symmetric forms, modulo the subgroup generated by relation $[N] = [H(L)]$ whenever $L \rightarrowtail N$ is a Lagrangian.

Following \cite[\S 2.2]{mbyoung2016}, the Hall module of $(\mathcal{A}, P, \Theta)$ is the rational vector space with basis $\Iso_{\textnormal{h}}(\mathcal{A})$,
\[
\mathcal{M}_{\mathcal{A}}= \bigoplus_{M \in \Iso_{\textnormal{h}}(\mathcal{A})} \mathbb{Q} \cdot [M].
\]
Define a left $\mathcal{H}_{\mathcal{A}}$-action on $\mathcal{M}_{\mathcal{A}}$ by the formula
\[
[U] \star [M] = \sum_{N \in \Iso_{\textnormal{h}}(\mathcal{A})} G^N_{U,M} [N],
\]
where $G^N_{U,M} \in \mathbb{Z}$ is the cardinality of the set
\begin{equation}
\label{eq:sdHallSet}
\mathcal{G}^N_{U,M} = \{ \widetilde{U} \overset{\textnormal{admiss.}}{\subset} N \mid \widetilde{U} \simeq U,\, \widetilde{U} \subset N \mbox{ is isotropic},\;\; N \git \widetilde{U} \simeq_P M\}.
\end{equation}
The assumption that $\mathcal{A}$ is finitary ensures that $\mathcal{G}^N_{U,M}$ is finite; see \cite[\S 2.2]{mbyoung2016}. The above formula makes $\mathcal{M}_{\mathcal{A}}$ into a left $\mathcal{H}_{\mathcal{A}}$-module. This can be verified directly, as in \cite[Theorem 2.4]{mbyoung2016}, or as a consequence of the relative $2$-Segal perspective of \cite[\S 3.4]{mbyoung2018b}. The vector space $\mathcal{M}_{\mathcal{A}}$ is graded by the submonoid $GW_0(\mathcal{A})^+ \subset GW_0(\mathcal{A})$ of classes of symmetric forms. The module structure $\star$ is compatible with the $K_0(\mathcal{A})^+$ and $GW_0(\mathcal{A})^+$ gradings via the monoid homomorphism
\[
K_0(\mathcal{A})^+ \rightarrow GW_0(\mathcal{A})^+, \qquad [U] \mapsto [H(U)].
\]

Suppose now that $\mathcal{A}$ admits finite copoducts and that $P$ is additive. Define a $\mathbb{Q}$-linear map
\[
\rho : \mathcal{M}_{\mathcal{A}} \rightarrow \mathcal{H}_{\mathcal{A}} \widehat{\otimes}_{\mathbb{Q}} \mathcal{M}_{\mathcal{A}},
\qquad
[N] \mapsto \sum_{\substack{(U,M) \in \Iso(\mathcal{A}) \times \Iso_{\textnormal{h}}(\mathcal{A}) \\ H(U) \oplus M \simeq_P N}}  [U] \otimes [M].
\]
This makes $\mathcal{M}_{\mathcal{A}}$ into a topological left $\mathcal{H}_{\mathcal{A}}$-comodule. Direct inspection shows that the equality
\begin{equation}
\label{eq:hallModCocomm}
(P_{\mathcal{H}} \otimes \id_{\mathcal{M}}) \circ \rho = \rho
\end{equation}
holds. Equivalently, for any $\xi \in \mathcal{M}_{\mathcal{A}}$, the terms $[U] \otimes [M]$ and $[P(U)] \otimes [M]$ appear in $\rho(\xi)$ with equal coefficients. The equality \eqref{eq:hallModCocomm} can be viewed as a module-theoretic analogue of the cocommutativity of $\mathcal{H}_{\mathcal{A}}$.

\begin{Rem}
There is also an interpretation of $\mathcal{M}_{\mathcal{A}}$ as the space of finitely supported functions $\Iso_{\textnormal{h}}(\mathcal{A}) \rightarrow \mathbb{Q}$ together with a convolution action of $\mathcal{H}_{\mathcal{A}}$.
\end{Rem}

By the following lemma, if $\mathcal{A}$ has the finite direct decomposition property, then $\mathcal{M}_{\mathcal{A}}$ is an honest comodule.

\begin{Lem}
If $\mathcal{A}$ has the finite direct decomposition property, then any symmetric form $N$ in $\mathcal{A}$ admits at most finitely many orthogonal decompositions $H(U) \oplus M \simeq_P N$, up to isomorphism in $U$ and isometry in $M$.
\end{Lem}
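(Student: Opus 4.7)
The plan is to pass from orthogonal decompositions of the symmetric form $N$ to ordinary direct sum decompositions of its underlying object, apply the finite direct decomposition hypothesis there, and then use the finitary assumption to bound the symmetric structures supported on each underlying summand.

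First, I would observe that forgetting the symmetric structure turns an orthogonal decomposition $H(U) \oplus M \simeq_P N$ into a ternary direct sum decomposition $U \oplus P(U) \oplus M \simeq N$ of the underlying object. The finite direct decomposition property is stated for binary decompositions, but it iterates immediately: by hypothesis, $N$ admits finitely many isomorphism classes of pairs $(A,B)$ with $A \oplus B \simeq N$, and for each such $B$ only finitely many refinements $B \simeq C \oplus D$ up to isomorphism. Hence $N$ has only finitely many ternary direct sum decompositions, considered up to isomorphism in each summand. In particular, only finitely many isomorphism classes of $U$ and of the underlying object of $M$ can appear in an orthogonal decomposition of $N$.

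It then remains to bound, for each fixed underlying object $X$, the number of isometry classes of symmetric form that can be supported on $X$. Since $\mathcal{A}$ is finitary, the set $\Hom_{\mathcal{A}}(X, P(X))$ is finite, so $X$ admits only finitely many morphisms $\psi: X \rightarrow P(X)$ satisfying the symmetric-form condition $P(\psi) \circ \Theta_X = \psi$, and a fortiori only finitely many isometry classes of symmetric form. Combining this with the previous step gives finitely many equivalence classes of orthogonal decompositions $H(U) \oplus M \simeq_P N$, up to isomorphism in $U$ and isometry in $M$.

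I do not anticipate a serious obstacle: the only mildly non-automatic step is iterating the binary finite direct decomposition property to obtain the ternary version, and this is a straightforward two-step application of the hypothesis. Notably, this argument uses neither the Reduction Assumption nor any cancellation property of symmetric forms, both of which one might a priori have worried about invoking.
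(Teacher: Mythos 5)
Your proof is correct and takes essentially the same approach as the paper's, which compresses the argument into a one-line observation that, by the finitary assumption, any object admits only finitely many symmetric forms; you have simply spelled out the implicit step of iterating the finite direct decomposition property to bound the ternary decompositions $U \oplus P(U) \oplus M \simeq N$ of the underlying object.
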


\begin{proof}
This follows from the observation that, because of the finitary assumption, any object of $\mathcal{A}$ admits at most finitely many symmetric forms.
\end{proof}

It is easy to see that $\mathcal{M}_{\mathcal{A}}$ is not a Hopf module over $\mathcal{H}_{\mathcal{A}}$, that is, $\rho$ is not a module homomorphism over $\Delta$. For example, this is already the case for $\mathcal{A}=\mathsf{Vect}_{\mathbb{F}_1}$.

\begin{Rems}
\begin{enumerate}[label=(\roman*)]
\item Heuristically, the coproduct $\Delta$ from Section \ref{sec:hallAlg} is a degenerate version of the Hall coproduct, as defined in \cite[\S 1]{green1995}, \cite[\S 1.4]{schiffmann2012b}, in that the latter is a sum over all conflations with fixed middle term, while the former involves only trivial (coproduct) conflations. In the same way, $\rho$ is a degenerate version of the Hall comodule structure, as defined in \cite[\S 2.1]{mbyoung2016}, which sums over all symmetric conflations with fixed middle term. That $\Delta$ is commutative and equality \eqref{eq:hallModCocomm} holds depends strongly on the use of degenerate coalgebra/comodule structures.

\item Recall that Green's theorem states that the Hall algebra of a finitary hereditary abelian category, with its Hall coproduct, is a twisted topological bialgebra \cite[Theorem 1]{green1995}; see also \cite[Theorem 1.9]{schiffmann2012}. In view of the previous remark, we will regard Theorem \ref{thm:protoGreen} as a degenerate version of Green's theorem.
\end{enumerate}
\end{Rems}

\subsection{A degenerate Green's theorem in the combinatorial case}

In this section we prove a module-comodule compatibility result for $\mathcal{M}_{\mathcal{A}}$.

To begin, consider $\mathcal{H}_{\mathcal{A}}$ as an $\mathcal{H}_{\mathcal{A}} \mhyphen \mathcal{H}_{\mathcal{A}}$-bimodule via left and right multiplication. By using the algebra anti-involution $P_{\mathcal{H}}$, we can view $\mathcal{H}_{\mathcal{A}}$ as a left $\mathcal{H}_{\mathcal{A}}^{\otimes 2}$-module. In this way, $\mathcal{H}_{\mathcal{A}}\otimes_{\mathbb{Q}} \mathcal{M}_{\mathcal{A}}$ becomes a left $\mathcal{H}_{\mathcal{A}}^{\otimes 3}$-module. Denote this module structure by $\star_P$. Explicitly, we have
\begin{equation}
\label{eq:PAction}
([U_1] \otimes [U_2] \otimes [U_3]) \star_P ([U_{-1}] \otimes [N]) = [U_1] \cdot [U_{-1}] \cdot [P(U_2)] \otimes [U_3] \star [N]
\end{equation}
where $U_i \in \Iso(\mathcal{A})$ and $N \in \Iso_{\textnormal{h}}(\mathcal{A})$.

We now come to the main result of this section.

\begin{Thm}
\label{thm:sdGreenClassical}
Let $\mathcal{A}$ be a finitary combinatorial proto-exact category with duality which has the finite direct decomposition property. Then the equality
\begin{equation}
\label{eq:classicalModCompat}
\rho(u \star \xi) = \Delta^2 (u) \star_P \rho(\xi)
\end{equation}
holds for all $u \in \mathcal{H}_{\mathcal{A}}$ and $\xi \in \mathcal{M}_{\mathcal{A}}$.
\end{Thm}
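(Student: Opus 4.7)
By bilinearity, it suffices to verify the identity for basis elements $u = [X]$ and $\xi = [M]$, and I will do this by comparing the coefficient of an arbitrary tensor $[U'] \otimes [M']$ on both sides of \eqref{eq:classicalModCompat}. On the left, $\rho([X] \star [M]) = \sum_N G^N_{X,M}\, \rho([N])$, and since $\rho([N])$ contains $[U'] \otimes [M']$ precisely when $H(U') \oplus M' \simeq_P N$, this coefficient is $G^{H(U') \oplus M'}_{X,M}$. On the right, unfolding $\Delta^2([X])$ (which by coassociativity sums over ordered triples $U_1 \oplus U_2 \oplus U_3 \simeq X$), $\rho([M])$, and formula \eqref{eq:PAction}, the coefficient of $[U'] \otimes [M']$ equals
\[
\sum_{U_1 \oplus U_2 \oplus U_3 \simeq X}\;\; \sum_{H(V) \oplus M'' \simeq_P M} F^{U'}_{U_1, V, P(U_2)}\; G^{M'}_{U_3, M''},
\]
where $F^{U'}_{U_1, V, P(U_2)}$ is the number of two-step flags in $U'$ with successive subquotients $U_1, V, P(U_2)$ from the bottom. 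The theorem thus reduces to a combinatorial identity between these two integers.

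The plan is to construct the bijection directly. Fix an isotropic admissible subobject $\widetilde{X} \rightarrowtail H(U') \oplus M'$ with $\widetilde{X} \simeq X$ and reduction isometric to $M$. Lemma \ref{lem:noIsoAssump} decomposes $\widetilde{X} = \widetilde{X}_1 \oplus \widetilde{X}_2$ with $\widetilde{X}_1 \rightarrowtail H(U')$ and $\widetilde{X}_2 \rightarrowtail M'$ both isotropic, and the reduction splits as the orthogonal sum $(H(U') \git \widetilde{X}_1) \oplus (M' \git \widetilde{X}_2)$. Applying the combinatorial property to $H(U') = U' \oplus P(U')$, the subobject $\widetilde{X}_1$ is of the form $A \oplus P(B)$ for an inflation $A \rightarrowtail U'$ and deflation $U' \twoheadrightarrow B$; Lemma \ref{lem:hyperbolicIso} characterizes isotropy by the vanishing of the composition $A \rightarrowtail U' \twoheadrightarrow B$ and identifies the reduction with $H(\ker(U' \twoheadrightarrow B)/A)$. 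Setting $U_1 = A$, $U_2 = P(B)$ (so that $B \simeq P(U_2)$ via $\Theta$) and $V = \ker(U' \twoheadrightarrow P(U_2))/U_1$, the parameter data for $\widetilde{X}_1$ becomes precisely a flag $U_1 \rightarrowtail Z \rightarrowtail U'$ with $Z/U_1 \simeq V$ and $U'/Z \simeq P(U_2)$, and the number of such $\widetilde{X}_1$ is $F^{U'}_{U_1, V, P(U_2)}$. The number of isotropic $\widetilde{X}_2 \rightarrowtail M'$ of isomorphism type $U_3$ with reduction isometric to $M''$ is $G^{M'}_{U_3, M''}$ by definition.

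Assembling these parameterizations, the global constraints $\widetilde{X} \simeq X$ and reduction isometric to $M$ translate respectively into $U_1 \oplus U_2 \oplus U_3 \simeq X$ and $H(V) \oplus M'' \simeq_P M$, so summation over $(U_1, U_2, U_3, V, M'')$ reproduces the right hand side above. The principal technical obstacle is bookkeeping: ensuring that the application of the combinatorial property (for the $H(U')$-summand) and of Lemma \ref{lem:hyperbolicIso} together give a unique parameterization of $\widetilde{X}_1$ with no double-counting, and verifying that the canonical isomorphism $P^2 \simeq \id$ correctly identifies the summand $P(B)$ of $\widetilde{X}_1$ with $U_2$ appearing in the decomposition of $X$ from $\Delta^2$. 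Once this matching is set up carefully, the bijection is immediate and the theorem follows.
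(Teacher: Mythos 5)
Your proposal is correct and follows essentially the same route as the paper. Both reduce to a bijection between $\mathcal{G}^{H(U')\oplus M'}_{X,M}$ and the set of flag/orthogonal-decomposition data indexing the right-hand coefficient, and both use the combinatorial property together with Lemmas \ref{lem:noIsoAssump} and \ref{lem:hyperbolicIso} to parametrize an isotropic $\widetilde{X} \rightarrowtail H(U')\oplus M'$ by a two-step flag in $U'$ and an isotropic reduction datum in $M'$. The only cosmetic differences are the order of the decomposition steps (you invoke Lemma \ref{lem:noIsoAssump} to split $\widetilde{X}$ across the two orthogonal summands before applying the combinatorial property inside the hyperbolic piece, whereas the paper first decomposes $\widetilde{X}$ into three pieces via the combinatorial property and then applies Lemma \ref{lem:noIsoAssump}), and that the paper writes the iterated flag count as $F^{U'}_{U_1,A}F^{A}_{V,P(U_2)}$ where you write $F^{U'}_{U_1,V,P(U_2)}$; these are the same number by associativity of the Hall product. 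One thing you leave implicit that the paper spells out is the explicit construction of the inverse map $\Psi$; since you parametrize $\widetilde{X}_1$ by flag data via Lemma \ref{lem:hyperbolicIso} in a visibly reversible way, this is a matter of writing it down rather than a gap, but a complete proof should include it.
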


\begin{proof}
By linearity, it suffices to prove that equation \eqref{eq:classicalModCompat} holds when $u=[U]$ and $\xi =[M]$. A direct calculation gives
\[
\rho([U] \star [M]) = \sum_{\substack{V \in \Iso(\mathcal{A}) \\ R \in \Iso_{\textnormal{h}}(\mathcal{A})}} G^{H(V) \oplus R}_{U,M} [V] \otimes [R].
\]
Similarly, if we denote by $\Upsilon_{U,M}$ the set of tuples
\[
\{ X,Y,Z, W \in \Iso(\mathcal{A}), \, T \in \Iso_{\textnormal{h}}(\mathcal{A}) \mid X \oplus Y \oplus Z \simeq U, \; H(W) \oplus T \simeq_P M \},
\]
then we can write
\begin{eqnarray*}
\Delta^2 ([U]) \star_P \rho([M]) & = & 
\sum_{(W, X,Y,Z; T) \in \Upsilon_{U,M}} ([X] \otimes [Y] \otimes [Z]) \star_P ([W] \otimes [T]) \\
& = & \sum_{\substack{(A,V) \in \Iso(\mathcal{A})^{\times 2} \\ R \in \Iso_{\textnormal{h}}(\mathcal{A})}} \sum_{(W, X,Y,Z; T) \in \Upsilon_{U,M} } F^V_{X,A} F^A_{W,P(Y)}  G^R_{Z,T} [V] \otimes [R].
\end{eqnarray*}
To prove the theorem it therefore suffices to define, for each $U,V \in \Iso(\mathcal{A})$ and $M,R \in \Iso_{\textnormal{h}}(\mathcal{A})$, a bijection
\[
\Phi: \mathcal{G}^{H(V) \oplus R}_{U,M} \longrightarrow \bigsqcup_{\substack{(W, X,Y,Z; T) \in \Upsilon_{U,M} \\ A \in \Iso(\mathcal{A})}} \mathcal{F}^V_{X,A} \times \mathcal{F}^A_{W,P(Y)} \times \mathcal{G}^R_{Z,T}.
\]

Suppose then that $\widetilde{U} \in \mathcal{G}_{U,M}^{H(V) \oplus R}$. The combinatorial assumption on $\mathcal{A}$ implies that there exist inflations $X \rightarrowtail V$, $Y \rightarrowtail P(V)$ and $Z \rightarrowtail R$ and an isomorphism
\[
\widetilde{U} \simeq X \oplus Y \oplus Z
\]
under which the map $\widetilde{U} \rightarrowtail H(V) \oplus R$ is determined by the above inflations. Lemma \ref{lem:noIsoAssump} implies that the maps $X \oplus Y \rightarrowtail H(V)$ and $Z \rightarrowtail R$ are isotropic. Let $A$ be a cokernel of $X \rightarrowtail V$. Then, by the first part of Lemma \ref{lem:hyperbolicIso}, the inflation $Y \rightarrowtail P(V)$ factors as
\[
\begin{tikzpicture}[baseline= (a).base]
\node[scale=1] (a) at (0,0){
\begin{tikzcd}[column sep=2.0em, row sep=2.0em]
Y \arrow[tail]{dr} \arrow[tail]{r} & P(A) \arrow[tail]{d} \\
{} & P(V).
\end{tikzcd}
};
\end{tikzpicture}
\]
Let $W \rightarrowtail A$ be a kernel of $A \twoheadrightarrow P(Y)$. Applying the final statement of Lemma \ref{lem:hyperbolicIso}, we find an isometry
\[
H(V) \git (X \oplus Y) \simeq_P H(W).
\]
Using this, we compute
\[
H(W) \oplus T \simeq_P H(V) \git (X \oplus Y) \oplus R \git Z \simeq_P (H(V) \oplus R) \git U \simeq_P M.
\]
It follows that the assignment $U \mapsto (X, W, Z)$ defines a map $\Phi$ as above.

We now construct an inverse $\Psi$ of $\Phi$. Let
\[
(\widetilde{X}, \widetilde{W}, \widetilde{Z}) \in \mathcal{F}^V_{X,A} \times \mathcal{F}^A_{W,P(Y)} \times \mathcal{G}^R_{Z,T}
\]
be an element of the codomain of $\Phi$ and define $\widetilde{Y} = P(A \slash \widetilde{W})$. Put
\begin{equation}
\label{eq:invBij}
\widetilde{U} = \widetilde{X} \oplus \widetilde{Y} \oplus \widetilde{Z} \rightarrowtail V \oplus P(A) \oplus R \rightarrowtail H(V) \oplus R,
\end{equation}
where, to define the second map, we have fixed an identification $A \simeq V \slash X$ and hence an inflation $P(A) \rightarrowtail P(V)$. The composition $\widetilde{X} \oplus \widetilde{Y} \oplus \widetilde{Z} \rightarrowtail H(V) \oplus R$  is isotropic since $\widetilde{Z} \rightarrowtail R$ is so and $\widetilde{X} \oplus \widetilde{Y} \rightarrowtail H(V)$ satisfies the conditions of Lemma \ref{lem:hyperbolicIso}. Using the final part of Lemma \ref{lem:hyperbolicIso}, we compute
\begin{eqnarray*}
(H(V) \oplus R) \git (\widetilde{X} \oplus \widetilde{Y} \oplus \widetilde{Z}) & \simeq_P & H(V) \git (\widetilde{X} \oplus \widetilde{Y}) \oplus R \git \widetilde{Z} \\
& \simeq_P &  H(W) \oplus T \\
& \simeq_P &  M.
\end{eqnarray*}
We can therefore define $\Psi(\widetilde{X}, \widetilde{Y}, \widetilde{Z})$ to be the composed inflation \eqref{eq:invBij}.

It is straightforward to verify that $\Psi$ is the inverse of $\Phi$.
\end{proof}

\begin{Rem}
Theorem \ref{thm:sdGreenClassical} continues to hold without the finite direct decomposition assumption. The only difference is that a short argument using the finitary assumption is required to verify that the right hand side of equation \eqref{eq:classicalModCompat} is well-defined.
\end{Rem}

Let $(H,\cdot, \Delta)$ be a Hopf algebra with antipode $S$ and let $(M,\star, \rho)$ be a left $H$-module and a left $H$-comodule. If the module and comodule structures satisfy the compatibility condition
\[
\rho(h \star m) = \Delta^2 (h) \star_{YD} \rho(m), \qquad h \in H, \, m \in M
\]
where, in Sweedler notation, the right hand side is defined to be
\[
(h_{(1)} \otimes h_{(2)} \otimes h_{(3)}) \star_{YD} (m_{(-1)} \otimes m_{(0)}) = h_{(1)} \cdot m_{(-1)} \cdot S(h_{(3)}) \otimes h_{(2)} \star m_{(0)},
\]
then $M$ is called a Yetter--Drinfeld module \cite{radford1993}. If $H$ is cocommutative, then $\Delta^2 (h) \star_{YD} \rho(m)$ can be rewritten as
\[
h_{(1)} \cdot m_{(-1)} \cdot S(h_{(2)}) \otimes h_{(3)} \star m_{(0)}.
\]
In this way, Theorem \ref{thm:sdGreenClassical} shows that $\mathcal{M}_{\mathcal{A}}$ has a structure reminiscent of a Yetter--Drinfeld module over $\mathcal{H}_{\mathcal{A}}$, with $P_{\mathcal{H}}$ playing the role of the antipode. Note that while $P_{\mathcal{H}}$ is an anti-homomorphism of both algebras and coalgebras, it is not an antipode.

\subsection{Representations of the primitive Lie algebra}
\label{sec:lieAlgReps}

As an application of Theorem \ref{thm:sdGreenClassical}, we construct additional algebraic structures on the space of coinvariants.

We begin with a standard definition from the theory of comodules.

\begin{Def}
An element of the left $\mathcal{H}_{\mathcal{A}}$-comodule $\xi  \in \mathcal{M}_{\mathcal{A}}$ is called coinvariant if it satisfies $\rho(\xi) = 1_{\mathcal{H}} \otimes \xi$.
\end{Def}

Write $W^{\coinv}_{\mathcal{A}}$ for the $GW_0(\mathcal{A})^+$-graded vector space of coinvariants of $\mathcal{M}_{\mathcal{A}}$.

\begin{Lem}
\label{lem:cuspSubspace}
The set of isometry classes of symmetric forms which do not admit a non-trivial hyperbolic summand is a homogeneous basis of $W^{\coinv}_{\mathcal{A}}$.
\end{Lem}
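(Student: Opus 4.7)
The plan is to unwind the definition of $\rho$ and read off the coinvariance condition term-by-term in the basis $\Iso_{\textnormal{h}}(\mathcal{A})$.

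First I would note that for any symmetric form $N$, the formula for $\rho$ gives
\[
\rho([N]) = \sum_{(U,M)} \delta_{N, H(U) \oplus M}^P \; [U] \otimes [M],
\]
where $\delta_{N, H(U) \oplus M}^P$ is $1$ if $H(U) \oplus M \simeq_P N$ and $0$ otherwise, and the sum is over $\Iso(\mathcal{A}) \times \Iso_{\textnormal{h}}(\mathcal{A})$. Since an isometry class $[H(U) \oplus M]$ is determined uniquely by the pair $(U,M)$, for a general element $\xi = \sum_N c_N [N]$ the coefficient of $[U] \otimes [M]$ in $\rho(\xi)$ is simply $c_{H(U) \oplus M}$.

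Next I would translate the coinvariance condition. The identity $\rho(\xi) = 1_{\mathcal{H}} \otimes \xi = [0] \otimes \sum_M c_M [M]$ holds if and only if the coefficient $c_{H(U) \oplus M}$ vanishes for every $U \neq 0$ and every $M$, while for $U = 0$ the coefficient reduces to $c_M$ automatically. Equivalently, $\xi$ is coinvariant if and only if $c_N = 0$ for every isometry class $[N]$ that admits a non-trivial hyperbolic summand.

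From this it is immediate that the set $\mathcal{B}$ of isometry classes of symmetric forms without a non-trivial hyperbolic summand spans $W^{\coinv}_{\mathcal{A}}$: any coinvariant $\xi$ is supported on $\mathcal{B}$, and conversely each $[N] \in \mathcal{B}$ satisfies $\rho([N]) = [0] \otimes [N]$ because the only decomposition $H(U) \oplus M \simeq_P N$ has $U = 0$. Linear independence is inherited from the fact that $\mathcal{B}$ is a subset of the basis $\Iso_{\textnormal{h}}(\mathcal{A})$ of $\mathcal{M}_{\mathcal{A}}$. Finally, homogeneity with respect to the $GW_0(\mathcal{A})^+$-grading follows since $\mathcal{B}$ consists of isomorphism classes of single symmetric forms, each of which lies in a single graded piece. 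There is no real obstacle here; the only minor care needed is to observe that for fixed $(U,M)$ the pair determines a unique isometry class $[H(U) \oplus M]$, so no multiplicity arises in the coefficient computation.
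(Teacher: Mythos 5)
Your proof is correct and follows essentially the same route as the paper: the key observation in both is that the pair $(U,M)$ determines the isometry class $[H(U)\oplus M]$ uniquely, so the coefficient of $[U]\otimes[M]$ in $\rho(\xi)$ reads off a single coefficient of $\xi$, making the coinvariance condition equivalent to being supported on classes without a non-trivial hyperbolic summand. Your version is slightly more systematic in writing out the coefficient computation, but it is the same argument.
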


\begin{proof}
From the definition of $\rho$, it is immediate that if $M$ is a symmetric form which does not admit a non-trivial hyperbolic summand, then $[M] \in \mathcal{M}_{\mathcal{A}}$ is coinvariant. Suppose then that
\[
\xi = \sum_{M \in \Iso_{\textnormal{h}}(\mathcal{A})} a_M [M] \in \mathcal{M}_{\mathcal{A}}
\]
is coinvariant. Assume that $a_{M_0}$ is non-zero. If $M_0$ admits a non-trivial hyperbolic summand, say $H(U) \oplus N \simeq_P M_0$ for some non-zero $U \in \mathcal{A}$ and (possibly trivial) $N \in \mathcal{A}_{\textnormal{h}}$, then $[U] \otimes [N]$ appears with coefficient $a_{M_0}$ in $\rho(\xi)$. This contradicts the assumption that $\xi$ is coinvariant.
\end{proof}

The following result is an analogue of the fact that the vector space of primitive elements of a bialgebra is a Lie algebra under the commutator bracket.

\begin{Thm}
\phantomsection
\label{thm:primLieAction}
\hspace{2em}
\begin{enumerate}[label=(\roman*)]
\item Let $u \in V^{\prim}_{\mathcal{A}}$ and $\xi \in W^{\coinv}_{\mathcal{A}}$. Then $u \star \xi \in W^{\coinv}_{\mathcal{A}}$ if and only if $u \in V^{\prim,-}_{\mathcal{A}}$.

\item The left action map
\[
L: V^{\prim,-}_{\mathcal{A}} \rightarrow \End_{\mathbb{Q}} (W^{\coinv}_{\mathcal{A}}), \qquad u \mapsto (\xi \mapsto u \star \xi)
\]
gives $W^{\coinv}_{\mathcal{A}}$ the structure of a representation of the Lie algebra $V^{\prim,-}_{\mathcal{A}}$.
\end{enumerate}
\end{Thm}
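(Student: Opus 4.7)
The plan is to use Theorem \ref{thm:sdGreenClassical} to derive an explicit formula for $\rho(u \star \xi)$ when $u \in V^{\prim}_{\mathcal{A}}$ and $\xi \in W^{\coinv}_{\mathcal{A}}$. Both parts of the theorem will then follow readily from the definitions of $V^{\prim,-}_{\mathcal{A}}$ and $W^{\coinv}_{\mathcal{A}}$.

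For the core computation underlying part (i), I begin by observing that primitivity of $u$, together with coassociativity and $\Delta(1_{\mathcal{H}}) = 1_{\mathcal{H}} \otimes 1_{\mathcal{H}}$, yields
\[
\Delta^2(u) = u \otimes 1_{\mathcal{H}} \otimes 1_{\mathcal{H}} + 1_{\mathcal{H}} \otimes u \otimes 1_{\mathcal{H}} + 1_{\mathcal{H}} \otimes 1_{\mathcal{H}} \otimes u,
\]
while coinvariance of $\xi$ gives $\rho(\xi) = 1_{\mathcal{H}} \otimes \xi$. Plugging these into identity \eqref{eq:classicalModCompat} of Theorem \ref{thm:sdGreenClassical} and applying the explicit definition \eqref{eq:PAction} of $\star_P$ termwise, using $1_{\mathcal{H}} \star \xi = \xi$ and $P_{\mathcal{H}}(1_{\mathcal{H}}) = 1_{\mathcal{H}}$, I expect to obtain
\[
\rho(u \star \xi) = \bigl(u + P_{\mathcal{H}}(u)\bigr) \otimes \xi + 1_{\mathcal{H}} \otimes (u \star \xi).
\]
This is the key formula that drives both statements.

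For part (i), the formula shows that $u \star \xi \in W^{\coinv}_{\mathcal{A}}$, equivalently $\rho(u \star \xi) = 1_{\mathcal{H}} \otimes (u \star \xi)$, is equivalent to $(u + P_{\mathcal{H}}(u)) \otimes \xi = 0$. Using the $K_0(\mathcal{A})^+$-grading on $\mathcal{H}_{\mathcal{A}}$, the two summands on the right live in distinct bidegrees whenever $u$ has positive degree (which is the only nontrivial case, since primitive elements in the connected bialgebra $\mathcal{H}_{\mathcal{A}}$ have strictly positive degree), so they may be isolated. Taking $\xi \neq 0$ then forces $P_{\mathcal{H}}(u) = -u$, that is, $u \in V^{\prim,-}_{\mathcal{A}}$. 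The converse implication is immediate from the same formula.

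Part (ii) is then essentially formal. Stability of $W^{\coinv}_{\mathcal{A}}$ under the action of $V^{\prim,-}_{\mathcal{A}}$ is precisely the content of part (i), so the map $L$ is well-defined. That $L$ is a Lie algebra homomorphism follows from the associativity of the $\mathcal{H}_{\mathcal{A}}$-module structure on $\mathcal{M}_{\mathcal{A}}$ combined with Lemma \ref{lem:lieSubalg}(i), which ensures that $V^{\prim,-}_{\mathcal{A}}$ is closed under the commutator bracket of $\mathcal{H}_{\mathcal{A}}$. I do not anticipate a substantial obstacle: the bulk of the work is absorbed into Theorem \ref{thm:sdGreenClassical}, and the remaining challenge is the careful bookkeeping in applying \eqref{eq:PAction} to each of the three summands of $\Delta^2(u) \star_P \rho(\xi)$.
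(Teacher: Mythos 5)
Your proposal is correct and follows essentially the same route as the paper: expand $\Delta^2(u)$ and $\rho(\xi)$ using primitivity and coinvariance, apply Theorem~\ref{thm:sdGreenClassical} with the explicit formula~\eqref{eq:PAction} for $\star_P$ to obtain $\rho(u \star \xi) = (u + P_{\mathcal{H}}(u)) \otimes \xi + 1_{\mathcal{H}} \otimes (u \star \xi)$, and read off both statements. Your added care about the grading argument and the implicit hypothesis $\xi \neq 0$ in the ``only if'' direction of (i) is a useful clarification that the paper passes over silently, but it does not alter the logic.
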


\begin{proof}
Let $u \in V^{\prim}_{\mathcal{A}}$ and $\xi \in W^{\coinv}_{\mathcal{A}}$. Then we have
\[
\Delta^2(u) = u \otimes 1_{\mathcal{H}} \otimes 1_{\mathcal{H}} + 1_{\mathcal{H}} \otimes u \otimes 1_{\mathcal{H}} + 1_{\mathcal{H}} \otimes 1_{\mathcal{H}} \otimes u,
\qquad
\rho(\xi) = 1_{\mathcal{H}} \otimes \xi.
\]
Using Theorem \ref{thm:sdGreenClassical}, we compute
\begin{eqnarray*}
\rho(u \star \xi) & = & \Delta^2(u) \star_P \rho(\xi) \\
& = & (u \otimes 1_{\mathcal{H}} \otimes 1_{\mathcal{H}} + 1_{\mathcal{H}} \otimes u \otimes 1_{\mathcal{H}} + 1_{\mathcal{H}} \otimes 1_{\mathcal{H}} \otimes u) \star_P (1 \otimes \xi) \\
& = & (u + P_{\mathcal{H}}(u)) \otimes \xi +  1_{\mathcal{H}} \otimes u \star \xi.
\end{eqnarray*}
This proves the first statement.

By the first statement, the left action of $V_{\mathcal{A}}^{\prim} \subset \mathcal{H}_{\mathcal{A}}$ on $\mathcal{M}_{\mathcal{A}}$ restricts to an action of $V^{\prim,-}_{\mathcal{A}}$ on $W^{\coinv}_{\mathcal{A}}$. A direct calculation then shows that this is a Lie algebra representation.
\end{proof}

\begin{Rem}
\begin{enumerate}[label=(\roman*)]
\item The second part of Theorem \ref{thm:primLieAction} should be contrasted with the case of a Hopf module $M$ over a Hopf algebra $H$, where there appears to be no natural non-trivial action of $V_H^{\prim}$ on $W^{\coinv}_M$.

\item On the other hand, one can show, using the above arguments, that if $M$ is a Yetter--Drinfeld module over a Hopf algebra $H$ with antipode $S$, then $W^{\coinv}_M$ is a representation of the Lie subalgebra
\[
V^{\prim,-}_H = \{v \in V^{\prim}_H \mid S(v)=-v\} \subset V^{\prim}_H.
\]
\end{enumerate}
\end{Rem}

\section{The constructible case}
\label{sec:constructCase}

While all results of this section remain valid over an algebraically closed field of characteristic zero, we will for simplicity take the ground field to be $\mathbb{C}$. Since some of the arguments in this section are similar to those of Section \ref{sec:combCase}, we will at points be brief.

\subsection{Constructible functions}
\label{sec:constructFun}

We refer the reader to \cite[\S\S 3,4]{joyce2006b} for a comprehensive discussion of constructible functions on schemes and stacks.

Let $Y$ be a complex algebraic variety. Given a subvariety $Z \subset Y$, denote by $\mathbf{1}_Z: Y \rightarrow \mathbb{C}$ the characteristic function of $Z$. Recall that a function $f: Y \rightarrow \mathbb{C}$ is said to be constructible if it is of the form
\begin{equation}
\label{eq:constrFun}
f = \sum_{i=1}^n a_i 1_{Y_i}
\end{equation}
for some constants $a_1, \dots, a_n \in \mathbb{C}$ and locally closed subvarieties $Y_1, \dots, Y_n \subset Y$. Constructible functions on $Y$ form a complex vector space $\Con(Y)$. The integral of a constructible function written in the form \eqref{eq:constrFun} is defined to be
\begin{equation}
\label{eq:varPushfwd}
\int_Y f d \chi = \sum_{i=1}^n a_i \chi(Y_i).
\end{equation}
Here $\chi(Y_i)$ denotes Euler characteristic of $Y_i$ with respect to the classical topology.

More generally, for an Artin stack $\mathfrak{Y}$, which we always assume to be locally of finite type and with affine stabilizers, Joyce \cite[\S 4]{joyce2006b} defined a complex vector space $\Con(\mathfrak{Y})$ of constructible functions on $\mathfrak{Y}$. For example, if a linear algebraic group $\mathsf{G}$ acts on an algebraic variety $Y$ with associated quotient stack $[Y \slash \mathsf{G}]$, then $\Con(\left[ Y \slash \mathsf{G} \right])$ is naturally identified with the subspace of $\mathsf{G}$-invariant constructible functions on $Y$.

Constructible functions on Artin stacks enjoy a number of functorial properties. First, if $\phi: \mathfrak{Y} \rightarrow \mathfrak{Z}$ is a finite type morphism of Artin stacks, then there is a pullback map
\[
\phi^* : \Con(\mathfrak{Z}) \rightarrow \Con(\mathfrak{Y}).
\]
See \cite[Definition 5.5]{joyce2006b}. The composition of two finite type morphisms satisfies
\begin{equation}
\label{eq:pullFunct}
(\phi_2 \circ \phi_1)^* = \phi_1^* \circ \phi_2^*.
\end{equation}
Second, if $\rho: \mathfrak{Y} \rightarrow \mathfrak{Z}$ is a representable morphism of Artin stacks, then a weighted fibrewise application of equation \eqref{eq:varPushfwd} gives rise to a pushforward map
\[
\rho_! : \Con(\mathfrak{Y}) \rightarrow \Con(\mathfrak{Z}).
\]
We refer to \cite[Definition 5.1]{joyce2006b}, where $\rho_!$ is denoted by $\textnormal{CF}^{\textnormal{stk}}(\rho)$, for a precise definition of the weights. The composition of two representable morphisms satisfies
\begin{equation}
\label{eq:pushFunct}
(\rho_2 \circ \rho_1)_! = \rho_{2!} \circ \rho_{1!}.
\end{equation}
See \cite[Theorem 5.4]{joyce2006b}. Moreover, given a homotopy pullback square
\[
\begin{tikzpicture}[baseline= (a).base]
\node[scale=1] (a) at (0,0){
\begin{tikzcd}[column sep=2.5em, row sep=2.5em]
\mathfrak{X} \arrow{d}[left]{\phi_1} \arrow{r}[above]{\rho_1} & \mathfrak{Y} \arrow{d}[right]{\phi_2} \\
\mathfrak{Z} \arrow{r}[below]{\rho_2}& \mathfrak{W}
\end{tikzcd}
};
\end{tikzpicture}
\]
in which $\phi_1$ and $\phi_2$ are finite type and $\rho_1$ and $\rho_2$ are representable, the equality
\begin{equation}
\label{eq:beckCond}
\phi_2^* \circ \rho_{2!} = \rho_{1!} \circ \phi_1^*
\end{equation}
of linear maps $\Con(\mathfrak{Z}) \rightarrow \Con(\mathfrak{Y})$ holds. See \cite[Theorem 5.6]{joyce2006b}.

\subsection{Constructible Hall algebras}
\label{sec:constructHallAlg}

Constructible versions of the Hall algebra of $\Rep_{\mathbb{C}}(Q)$ have been studied by many authors \cite{riedtmann1994}, \cite{ringel1992b}, \cite{kapranov2000b}. A detailed study of constructible and motivic Hall algebras of abelian categories can be found in \cite{joyce2007}.

Given $d \in \Lambda_Q^+$, let $R_d$ be the affine space of representations of $Q$ of dimension vector $d$. Explicitly, we have
\[
R_d \simeq \bigoplus_{\left( i \xrightarrow[]{\alpha} j \right) \in Q_1 } \Hom_{\mathbb{C}}(\mathbb{C}^{d_i}, \mathbb{C}^{d_j}).
\]
The linear algebraic group
\[
\mathsf{GL}_d = \prod_{i \in Q_0} \mathsf{GL}_{d_i}(\mathbb{C})
\]
acts linearly on $R_d$ by change of basis. The moduli stack of representations of dimension vector $d$ is the quotient stack $\mathfrak{M}_d = \left[ R_d \slash \mathsf{GL}_d \right]$. The disjoint union
\[
\mathfrak{M} = \bigsqcup_{d \in \Lambda_Q^+} \mathfrak{M}_d
\]
is an Artin stack, locally of finite type with affine stabilizers.

As a complex vector space, the constructible Hall algebra of $Q$ is
\[
\mathbb{H}_Q = \Con(\mathfrak{M}) =\bigoplus_{d \in \Lambda_Q^+} \Con(\mathfrak{M}_d).
\]
An associative algebra structure is defined on $\mathbb{H}_Q$ using the usual push-pull construction of Hall algebras; see \cite[Theorem 4.3]{joyce2007}. The product can be written in the following explicit form (\textit{cf.} Section \ref{sec:hallAlg}):
\[
(f_1 \cdot f_2)(W) = \int_{U \in \textnormal{Gr}(W)} f_1(U) f_2(W \slash U) d \chi, \qquad f_1, f_2 \in \mathbb{H}_Q.
\]
Here $\textnormal{Gr}(W)$ is the variety parameterizing subrepresentations of the representation $W$. Turning to coproducts, define a $\mathbb{C}$-linear map
\[
\Delta: \mathbb{H}_Q \rightarrow \Con(\mathfrak{M} \times \mathfrak{M})
\]
by the formula
\[
\Delta(f)(U , V) = f(U \oplus V), \qquad f \in \mathbb{H}_Q.
\]
The map $\Delta$ is coassociative and cocommutative.

The proof of \cite[Theorem 4.17]{joyce2007} shows that $\Delta$ is multiplicative, in the sense that the equality
\[
\Delta (f_1 \cdot f_2) = \Delta(f_1) \odot \Delta(f_2), \qquad f_1, f_2 \in \mathbb{H}_Q
\]
holds, where $\odot$ is the natural Hall-type product on $\Con(\mathfrak{M} \times \mathfrak{M})$. Interpreting $\Con(\mathfrak{M} \times \mathfrak{M})$ as a topological completion of $\mathbb{H}_Q \otimes_{\mathbb{C}} \mathbb{H}_Q$, multiplicativity of $\Delta$ becomes the statement that $\mathbb{H}_Q$ is a topological bialgebra. Alternatively, $\Delta$ restricts to an honest coproduct on $\mathbb{H}_Q^{\ind}$, the subalgebra of $\mathbb{H}_Q$ generated by the subspace of constructible functions on $\mathfrak{M}$ which are supported on indecomposable representations. This makes $\mathbb{H}_Q^{\ind}$ into a cocommutative bialgebra \cite[Theorem 4.20]{joyce2007}. The algebra $\mathbb{H}_Q^{\ind}$ is connected (see \cite[\S 4.5]{bridgeland2012b}) with primitive Lie algebra $\mathbb{V}^{\prim}_Q$ spanned by constructible functions supported on indecomposables. An application of the Milnor--Moore theorem then shows that there is a bialgebra isomorphism $\mathcal{U}(\mathbb{V}^{\textnormal{prim}}_Q) \xrightarrow[]{\sim} \mathbb{H}^{\ind}_Q$.

\subsection{Constructible Hall modules}
\label{sec:constructHallMod}

Suppose now that $Q$ has a contravariant involution $\sigma$ and duality data $(s, \tau)$. Let $(P,\Theta)$ be the associated duality structure on $\Rep_{\mathbb{C}}(Q)$, as in Section \ref{sec:quiverReps}.

Arguing as in Section \ref{sec:hallAlgDuality}, there is a $\mathbb{C}$-linear involution $P_{\mathbb{H}}: \mathbb{H}_Q \rightarrow \mathbb{H}_Q$ which is an algebra and coalgebra anti-involution. Since $U \in \Rep_{\mathbb{C}}(Q)$ is indecomposable if and only if $P(U)$ is indecomposable, $P_{\mathbb{H}}$ preserves $\mathbb{H}^{\ind}_Q \subset \mathbb{H}_Q$.

For each $e \in \Lambda_Q^{\sigma, +}$, there is an affine subvariety $R_e^{\sigma} \subset R_e$ of symmetric representations of dimension vector $e$ and an algebraic subgroup $\mathsf{G}_e^{\sigma} \leq \mathsf{GL}_e$ which acts linearly on $R_e^{\sigma}$ in such a way that the moduli stack of symmetric representations of dimension vector $e$ is $\mathfrak{N}_e = \left[ R_e^{\sigma} \slash \mathsf{G}_e^{\sigma} \right]$. Explicit descriptions of $R^{\sigma}_e$ and $\mathsf{G}_e^{\sigma}$ can be found in \cite[\S 1.2]{franzenyoung2018}. Set
\[
\mathfrak{N} = \bigsqcup_{e \in \Lambda_Q^{\sigma,+}} \mathfrak{N}_e.
\]
For each $(d , e) \in \Lambda_Q^+ \times \Lambda_Q^{\sigma,+}$, there is a subvariety $R_{d,e}^{\sigma} \subset R^{\sigma}_{d + H(e)}$ of symmetric representations for which the standard $Q_0$-graded isotropic subspace of dimension vector $d$ forms a subrepresentation. The subgroup $\mathsf{G}_{d,e}^{\sigma} \leq \mathsf{G}_{H(d) + e}^{\sigma}$ which stabilizes this isotropic subspace acts on $R_{d,e}^{\sigma}$. The quotient stack $\mathfrak{N}_{d,e} = \left[ R_{d,e}^{\sigma} \slash \mathsf{G}_{d,e}^{\sigma} \right]$ parametrizes symmetric short exact sequences in $\Rep_{\mathbb{C}}(Q)$ of the appropriate dimension vector. Setting
\[
\mathfrak{N}_{\bullet,\bullet} = \bigsqcup_{(d,e) \in \Lambda_Q^+ \times \Lambda_Q^{\sigma,+}} \mathfrak{N}_{d,e},
\]
we obtain a correspondence of stacks
\begin{equation}
\label{eq:hallModCorr}
\begin{tikzpicture}[baseline= (a).base]
\node[scale=1] (a) at (0,0){
\begin{tikzcd}[column sep=4.0em, row sep=0.5em]
{} & \mathfrak{N}_{\bullet,\bullet} \arrow{dl}[above left]{\pi_1 \times \pi_3^{\sigma}} \arrow{dr}[above right]{\pi_2^{\sigma}} & {} \\
\mathfrak{M} \times \mathfrak{N} & {} & \mathfrak{N},
\end{tikzcd}
};
\end{tikzpicture}
\end{equation}
with $\pi_1 \times \pi_3^{\sigma}$ and $\pi_2^{\sigma}$ sending a symmetric short exact sequence to its first and last terms and middle term, respectively.

\begin{Lem}
\phantomsection
\label{lem:finRepMaps}
\begin{enumerate}[label=(\roman*)]
\item The maps $\pi_1 \times \pi_3^{\sigma}$ and $\pi_2^{\sigma}$ are finite type.
\item The map $\pi_2^{\sigma}$ is representable.
\end{enumerate}
\end{Lem}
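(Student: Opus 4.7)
The plan is to realize both morphisms as arising from equivariant maps of finite-type complex varieties along homomorphisms of linear algebraic groups, from which part (i) follows at once, and then to verify part (ii) by the standard criterion that a morphism of Artin stacks is representable if and only if it induces an injection on automorphism groups at every geometric point.

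For part (i), recall that $\mathfrak{N}_{d,e} = [R_{d,e}^{\sigma} \slash \mathsf{G}_{d,e}^{\sigma}]$ and $\mathfrak{N}_{H(d)+e} = [R^{\sigma}_{H(d)+e} \slash \mathsf{G}^{\sigma}_{H(d)+e}]$. The morphism $\pi_2^{\sigma}$ is induced by the closed immersion $R_{d,e}^{\sigma} \hookrightarrow R^{\sigma}_{H(d)+e}$, which is equivariant along the closed subgroup inclusion $\mathsf{G}_{d,e}^{\sigma} \hookrightarrow \mathsf{G}^{\sigma}_{H(d)+e}$. The morphism $\pi_1 \times \pi_3^{\sigma}$ is induced by the algebraic map $R_{d,e}^{\sigma} \rightarrow R_d \times R_e^{\sigma}$ which sends a symmetric representation $N$ equipped with the standard isotropic subrepresentation $U$ of dimension vector $d$ to the pair $(U, U^{\perp} \slash U)$, where $U^{\perp} \slash U$ inherits its symmetric form from the Reduction Assumption; this is equivariant along the natural group homomorphism $\mathsf{G}_{d,e}^{\sigma} \rightarrow \mathsf{GL}_d \times \mathsf{G}_e^{\sigma}$ given by restricting a self-isometry to the isotropic subspace and to the orthogonal quotient. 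Since all source and target varieties are of finite type over $\mathbb{C}$ and the structure groups are linear algebraic, both induced morphisms of quotient stacks are of finite type.

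For part (ii), a $\mathbb{C}$-point of $\mathfrak{N}_{d,e}$ consists of a symmetric representation $N$ of dimension vector $H(d)+e$ together with a choice of isotropic subrepresentation $i : U \rightarrowtail N$ of dimension vector $d$. Its automorphism group in the stack $\mathfrak{N}_{d,e}$ is the stabilizer in $\Aut_P(N)$ of the subrepresentation $i(U) \subset N$. Under $\pi_2^{\sigma}$, which simply forgets the isotropic subrepresentation, this stabilizer is mapped by the tautological inclusion into $\Aut_P(N)$, the automorphism group of the image point in $\mathfrak{N}_{H(d)+e}$. This inclusion is visibly a monomorphism, so $\pi_2^{\sigma}$ is representable.

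The only step requiring care is the identification of automorphism groups of objects of $\mathfrak{N}_{d,e}$ with stabilizers of isotropic subrepresentations inside isometry groups; the rest is a routine exercise in the formalism of quotient stacks, and I anticipate no serious obstacle.
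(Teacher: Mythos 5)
Your proposal is correct and takes essentially the same route as the paper: for (i) both arguments reduce to the fact that the morphisms come from equivariant maps of finite-type varieties along homomorphisms of linear algebraic groups, and for (ii) your pointwise injectivity-of-automorphism-groups criterion is exactly the content of the paper's observation that $\pi_2^{\sigma}$ is induced by a map equivariant with respect to the subgroup inclusion $\mathsf{G}_{d,e}^{\sigma} \hookrightarrow \mathsf{G}_{H(d)+e}^{\sigma}$. The only difference is one of exposition: you spell out the equivariant description of $\pi_1 \times \pi_3^{\sigma}$ and invoke the representability criterion explicitly, whereas the paper leaves these points implicit.
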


\begin{proof}
The first statement follows from the fact that $\mathfrak{N}_{\bullet,\bullet}$ is a disjoint union of algebraic stacks of finite type. The second statement follows from the observation that $\pi_2^{\sigma}$ is induced by a map $R_{d,e}^{\sigma} \hookrightarrow R_{H(d) + e}^{\sigma}$ which is equivariant with respect to the inclusion $\mathsf{G}_{d,e}^{\sigma} \hookrightarrow \mathsf{G}_{H(d)+e}^{\sigma}$.
\end{proof}

As a complex vector space, the constructible Hall module of $Q$ is
\[
\mathbb{M}_Q = \Con(\mathfrak{N}) = \bigoplus_{e \in \Lambda_Q^{\sigma,+}} \textnormal{Con}(\mathfrak{N}_e).
\]
By Lemma \ref{lem:finRepMaps}, push-pull along the correspondence \eqref{eq:hallModCorr} can be used to define a linear map $\mathbb{H}_Q \otimes_{\mathbb{C}} \mathbb{M}_Q \rightarrow \mathbb{M}_Q$. This makes $\mathbb{M}_Q$ into a left $\mathbb{H}_Q$-module. The module structure can be written explicitly as
\[
(f \star \xi) (N) = \int_{U \in \textnormal{IGr}(N)} f(U) \xi( N \git U) d \chi, \qquad f \in \mathbb{H}_Q, \, \xi \in \mathbb{M}_Q
\]
where $\textnormal{IGr}(N)$ is the variety parametrizing isotropic subrepresentations of the symmetric representation $N$.

\begin{Ex}
Given $U \in R_d$, denote by $\mathbf{1}_U \in \mathbb{H}_Q$ the characteristic function of the orbit $\mathsf{GL}_d \cdot U$. Similarly, associated to each $M \in R_e^{\sigma}$ is a characteristic function $\mathbf{1}^{\sigma}_M \in \mathbb{M}_Q$. In this setting, the action $\star$ specializes to
\[
\mathbf{1}_U \star \mathbf{1}^{\sigma}_M = \sum_{N \in \textnormal{Iso}_{\textnormal{h}}(\Rep_{\mathbb{C}}(Q))} \chi(\mathcal{G}^N_{U,M}) \mathbf{1}^{\sigma}_N,
\]
where $\mathcal{G}^N_{U,M}$ denotes the set \eqref{eq:sdHallSet} with its natural variety structure.
\end{Ex}

Continuing, define a $\mathbb{C}$-linear map
\[
\rho: \mathbb{M}_Q \rightarrow \Con(\mathfrak{M} \times \mathfrak{N})
\]
by the formula
\[
\rho(\xi)(U, M) = \xi (H(U) \oplus M), \qquad \xi \in \mathbb{M}_Q.
\]
This makes $\mathbb{M}_Q$ into a topological left $\mathbb{H}_Q$-comodule. It is immediate that the analogue of equation \eqref{eq:hallModCocomm} holds in the present setting.

\subsection{A degenerate Green's theorem in the constructible case}

We now come to the constructible analogue of Theorem \ref{thm:sdGreenClassical}.

\begin{Thm}
\label{thm:sdGreenConstructible}
The equality
\begin{equation}
\label{eq:constructModCompat}
\rho(u \star \xi) = \Delta^2 (u) \ostar_P \rho(\xi)
\end{equation}
holds for all $u \in \mathbb{H}_Q$ and $\xi \in \mathbb{M}_Q$, where $\ostar_P$ denotes the action of $\Con(\mathfrak{M}^{\times 3})$ on $\Con(\mathfrak{M} \times \mathfrak{N})$ determined by equation \eqref{eq:PAction}.
\end{Thm}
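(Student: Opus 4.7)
The plan is to transpose the combinatorial proof of Theorem \ref{thm:sdGreenClassical} to the constructible setting using the push-pull formalism of Section \ref{sec:constructFun}, and to compensate for the absence of the combinatorial decomposition property (Lemma \ref{lem:noIsoAssump}) over $\mathbb{C}$ by a $\mathbb{C}^*$-localization argument.

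First I would rewrite both sides of \eqref{eq:constructModCompat} as push-pulls of $u \otimes \xi \in \Con(\mathfrak{M} \times \mathfrak{N})$. The comodule structure is the pullback $\rho = \Psi^*$ along $\Psi: (U, M) \mapsto H(U) \oplus M$, and the module action is push-pull along the correspondence \eqref{eq:hallModCorr}. Forming the homotopy pullback of $\pi_2^{\sigma}$ along $\Psi$ and applying \eqref{eq:pullFunct}, \eqref{eq:pushFunct}, and \eqref{eq:beckCond}---representability being supplied by Lemma \ref{lem:finRepMaps}---yields
\[
\rho(u \star \xi) = p_{L!}\, q_L^*(u \otimes \xi),
\]
where $\mathfrak{Z}_L$ is the Artin stack parametrizing tuples $(U, M, \widetilde{U})$ with $\widetilde{U} \subset H(U) \oplus M$ isotropic, $p_L$ is the forgetful projection to $\mathfrak{M} \times \mathfrak{N}$, and $q_L(U, M, \widetilde{U}) = (\widetilde{U},\, (H(U) \oplus M) \git \widetilde{U})$. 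An analogous manipulation of the right hand side---unpacking \eqref{eq:PAction} as the triple product in $\mathbb{H}_Q$ composed with the module action, and using that $\Delta$ and $\rho$ are themselves pullbacks along direct-sum maps---produces $\Delta^2(u) \ostar_P \rho(\xi) = p_{R!}\, q_R^*(u \otimes \xi)$, where $\mathfrak{Z}_R$ parametrizes quadruples $(U, M, V_1 \subset V_{12} \subset U, V_3 \subset M \text{ isotropic})$, $p_R$ is the forgetful projection, and
\[
q_R(U, M, V_1 \subset V_{12}, V_3) = \bigl( V_1 \oplus P(U/V_{12}) \oplus V_3,\; H(V_{12}/V_1) \oplus (M \git V_3) \bigr).
\]

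Lemma \ref{lem:hyperbolicIso} now furnishes a canonical morphism $\Phi: \mathfrak{Z}_R \to \mathfrak{Z}_L$ sending such a quadruple to the isotropic subrepresentation $V_1 \oplus P(U/V_{12}) \oplus V_3$ of $H(U) \oplus M$, with $p_L \circ \Phi = p_R$ and $q_L \circ \Phi = q_R$. Over $\mathbb{F}_1$, Lemma \ref{lem:noIsoAssump} would render $\Phi$ an equivalence and conclude the proof; over $\mathbb{C}$ its image is only the substack of \emph{decomposable} isotropic subrepresentations (those splitting as $(\widetilde{U} \cap U) \oplus (\widetilde{U} \cap P(U)) \oplus (\widetilde{U} \cap M)$). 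I would close this gap by localizing pointwise over each $(V, R) \in \mathfrak{M} \times \mathfrak{N}$, using the $\mathbb{C}^*$-action on $H(V) \oplus R$ given by $t \mapsto (t \cdot \id_V) \oplus (t^{-1} \cdot \id_{P(V)}) \oplus \id_R$. This is an isometry, since the hyperbolic pairing between $V$ and $P(V)$ is rescaled by $t \cdot t^{-1} = 1$, and it acts on the variety of isotropic subrepresentations of $H(V) \oplus R$. The pointwise integrand $\widetilde{U} \mapsto u(\widetilde{U})\, \xi((H(V) \oplus R) \git \widetilde{U})$ is $\mathbb{C}^*$-invariant (the action moves $\widetilde{U}$ within its isomorphism class and acts by isometries on the reduction), and its fixed locus is exactly the decomposable locus, i.e.\ the image of $\Phi$ over $(V, R)$. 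Since $\chi(\mathbb{C}^*) = 0$, the free-orbit strata contribute nothing to the Euler-characteristic integral \eqref{eq:varPushfwd}, so the pushforward equals its restriction to the decomposable locus, on which $\Phi$ is an equivalence; the final part of Lemma \ref{lem:hyperbolicIso} then identifies the reduction in the form required to match $q_R$.

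The principal obstacle I anticipate is reconciling this varietal $\mathbb{C}^*$-localization with Joyce's stacky pushforward, given that $t \cdot \id_V$ is an automorphism of the object $V \in \mathfrak{M}$ and is therefore absorbed into its stabilizer. I expect this to be handled by evaluating $p_{L!}$ via its defining fibrewise weighted Euler integral \cite[Definition 5.1]{joyce2006b} on a $\mathbb{C}^*$-invariant locally closed stratification of the fibre into fixed and free orbit loci, the free strata contributing zero by additivity of Euler characteristic.
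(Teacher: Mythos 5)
Your proposal is correct and takes essentially the same approach as the paper: a push-pull comparison reduced to the decomposable locus via precisely the $\mathbb{C}^{\times}$-action $z \mapsto (z\cdot\id_V) \oplus (z^{-1}\cdot\id_{P(V)}) \oplus \id_R$, with $\chi(\mathbb{C}^{\times})=0$ killing the complement. The paper resolves the stacky subtlety you flagged by exhibiting this as a free $\mathbb{C}^{\times}$-action on the variety $\Aut_{\mathfrak{M}\times\mathfrak{N}}(N,V,R)/\Aut_{\mathfrak{F}}(U,N,V,R)$ occurring in Joyce's weight function $m_{p_2}$, so that the weight itself vanishes on the non-decomposable locus, which is the rigorous form of your fibrewise localization.
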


\begin{proof}
Let $\Upsilon$ be the Artin stack\footnote{This stack should be compared with the target set of the map $\Phi$ from the proof of Theorem \ref{thm:sdGreenClassical}.} whose complex points are pairs of diagrams of (symmetric) short exact sequences of the form
\[
\spadesuit= 
\left(
\begin{tikzpicture}[baseline= (a).base]
\node[scale=1] (a) at (0,0){
\begin{tikzcd}[column sep=1.0em, row sep=1.0em]
{} & X \arrow[tail]{d} & {} \\
{} &  V \arrow[two heads]{d} & {} \\
W \arrow[tail]{r} & A \arrow[two heads]{r} & P(Y)
\end{tikzcd}
};
\end{tikzpicture}
\;\; , \;\;
\begin{tikzpicture}[baseline= (a).base]
\node[scale=1] (a) at (0,0){
\begin{tikzcd}[column sep=1.5em, row sep=1.0em]
Z \arrow[tail]{d} \\
R \arrow[two heads,dashed]{d} \\
T
\end{tikzcd}
};
\end{tikzpicture}
\right).
\]
An isomorphism $(f,g) : \spadesuit \rightarrow \spadesuit^{\prime}$ of two such pairs consists of an isomorphism $f: V \xrightarrow[]{\sim} V^{\prime}$ and an isometry $g: R \xrightarrow[]{\sim} R^{\prime}$ such that $g(Z) = Z^{\prime}$, $f(X) = X^{\prime}$ and $f(W) = W^{\prime}$. The stack $\Upsilon$ fits into the following diagram:
\begin{equation}
\label{eq:constYDdiag}
\begin{tikzpicture}[baseline= (a).base]
\node[scale=1] (a) at (0,0){
\begin{tikzcd}[column sep=8.0em, row sep=1.75em]
\mathfrak{M} \times \mathfrak{N} & \mathfrak{M}^{\times 3} \times (\mathfrak{M} \times \mathfrak{N}) \arrow{l}[above]{\oplus^2 \times \left( H(-) \oplus (-) \right)}\\
\mathfrak{N}_{\bullet,\bullet} \arrow{d}[left]{\pi^{\sigma}_2} \arrow{u}[left]{\pi_1 \times \pi_3^{\sigma}} & \Upsilon \arrow{d}[right]{f_3} \arrow{l}[above]{f_1} \arrow{u}[right]{f_2} \\
\mathfrak{N} & \mathfrak{M} \times \mathfrak{N}. \arrow{l}[below]{H(-) \oplus (-)}
\end{tikzcd}
};
\end{tikzpicture}
\end{equation}
At the level of objects, the morphisms $f_1$, $f_2$ and $f_3$ are defined by
\[
f_1(\spadesuit)= \left( X \oplus Y \oplus Z \rightarrowtail H(V) \oplus R \dashtwoheadrarrow H(W) \oplus T \right)
\]
and
\[
f_2(\spadesuit) = ((X,Y,Z),(W,T)), \qquad
f_3(\spadesuit) = (V,R).
\]
The morphisms $f_1$ and $f_2$ are of finite type while $f_3$ is representable. This can be proved in the same way as Lemma \ref{lem:finRepMaps}. Applying $\Con(-)$ to the diagram \eqref{eq:constYDdiag} gives the following diagram:
\begin{equation}
\label{eq:constYDdiagCon}
\begin{tikzpicture}[baseline= (a).base]
\node[scale=1] (a) at (0,0){
\begin{tikzcd}[column sep=8.0em, row sep=1.75em]
\mathbb{H}_Q \otimes_{\mathbb{C}} \mathbb{M}_Q \arrow{d}[left]{(\pi_1 \times \pi_3^{\sigma})^*} \arrow{r}[above]{\left(\oplus^2 \times \left( H(-) \oplus (-) \right) \right)^*} & \Con \left( \mathfrak{M}^{\times 3} \times (\mathfrak{M} \times \mathfrak{N}) \right) \arrow{d}[right]{f_2^*} \\
\Con(\mathfrak{N}_{\bullet,\bullet}) \arrow{d}[left]{\pi^{\sigma}_{2!}} \arrow{r}[above]{f^*_1} & \Con(\Upsilon) \arrow{d}[right]{f_{3!}} \\
\mathbb{M}_Q \arrow{r}[below]{\left(H(-) \oplus (-) \right)^*} & \Con(\mathfrak{M} \times \mathfrak{N}).
\end{tikzcd}
};
\end{tikzpicture}
\end{equation}
Inspection of the definitions shows that the counterclockwise and clockwise compositions along the outer square of the diagram \eqref{eq:constYDdiagCon} give the operations on the left and right hand sides of equation \eqref{eq:constructModCompat}, respectively. Our task is therefore to establish the commutativity of the diagram \eqref{eq:constYDdiagCon}.

The upper square of the diagram \eqref{eq:constYDdiag} is homotopy commutative by inspection. Equation \eqref{eq:pullFunct} then implies that the upper square of the diagram \eqref{eq:constYDdiagCon} commutes. On the other hand, while the bottom square of \eqref{eq:constYDdiag} is homotopy commutative, it is not a homotopy pullback. Hence, we cannot apply equation \eqref{eq:beckCond} to conclude the commutativity of the bottom square of \eqref{eq:constYDdiagCon}. Instead, we argue using a modification of the proof of \cite[Theorem 4.17]{joyce2007}. Let $\mathfrak{F}$ be the homotopy pullback of the bottom left hand corner of the diagram \eqref{eq:constYDdiag}. We then have a homotopy commutative diagram
\[
\begin{tikzpicture}[baseline= (a).base]
\node[scale=1] (a) at (0,0){
\begin{tikzcd}[column sep=1.5em, row sep=1.5em]
\mathfrak{N}_{\bullet,\bullet} \arrow{dd}[left]{\pi_2^{\sigma}} & {} & \Upsilon \arrow{dd}[right,name=B]{f_3} \arrow{dl}[above left]{\phi} \arrow{ll}[above,name=A]{f_1} \\
{} & \mathfrak{F} \arrow{ul}[below left]{p_1} \arrow{dr}[below left]{p_2} \arrow[dl,phantom,"\tiny \circled{1}",midway] \arrow[rightarrow,to=A,phantom,"\tiny \circled{2}",midway] \arrow[rightarrow,to=B,phantom,"\tiny \circled{3}",midway] & {} \\
\mathfrak{N} & {} & \arrow{ll}[below]{H(-) \oplus (-)}\mathfrak{M} \times \mathfrak{N}.
\end{tikzcd}
};
\end{tikzpicture}
\]
Explicitly, the stack $\mathfrak{F}$ parametrizes tuples, abusively denoted by $(U,N,V,R)$, which consist of a symmetric short exact sequence
\[
U \rightarrowtail N \dashtwoheadrarrow M,
\]
a pair $(V,R) \in \Rep_{\mathbb{C}}(Q) \times \Rep_{\mathbb{C}}(Q)_h$ and an isometry $H(V) \oplus R \simeq_P N$. Upon applying $\Con(-)$, the square $\tiny \circled{1}$ and the triangle $\tiny \circled{2}$ commute by equations \eqref{eq:beckCond} and \eqref{eq:pullFunct}, respectively. The image of the morphism $\phi$ is the closed substack $\mathfrak{G} \subset \mathfrak{F}$ which parametrizes tuples $(U,N,V,R)$ as above which satisfy the additional condition
\[
(U \cap V) \oplus (U \cap P(V) ) \oplus (U \cap R) \simeq U.
\]
As $\mathfrak{G}$ is closed, there is a decomposition
\[
\Con(\mathfrak{F}) = \Con(\mathfrak{G}) \oplus \Con(\mathfrak{F} \backslash \mathfrak{G}).
\]
Upon restriction to $\Con(\mathfrak{G})$, the triangle $\tiny \circled{3}$ commutes because of equation \eqref{eq:pushFunct}. Consider instead the restriction of the triangle $\tiny \circled{3}$ to $\Con(\mathfrak{F} \backslash \mathfrak{G})$. By construction, $\phi^*$ restricts to the zero map; we will show that the same is true of $p_{2!}$. Let $(U,N,V,R)$ be a complex point of $\mathfrak{F} \backslash \mathfrak{G}$. Then the weight function $m_{p_2} : \mathfrak{F} \rightarrow \mathbb{Q}$ used to define the pushforward $p_{2!}$ is equal to
\[
m_{p_2}(U,N,V,R) = \chi(\Aut_{\mathfrak{M} \times \mathfrak{N}}(N,V,R) \slash \Aut_{\mathfrak{F}}(U,N,V,R)).
\]
Concretely, $\Aut_{\mathfrak{M} \times \mathfrak{N}}(N,V,R) \simeq \Aut(V) \times \Aut_P(R)$, which we interpret as a subgroup of $\Aut_P(N)$ via the embedding
\[
\Aut(V) \times \Aut_P(R) \hookrightarrow  \Aut_P(H(V)) \times \Aut_P(R) \hookrightarrow \Aut_P(N).
\]
Similarly, $\Aut_{\mathfrak{F}}(U,N,V,R)$ is the subgroup of isometries of $N$ of the above form which, in addition, preserve $U \rightarrowtail N$. Define an injective group homomorphism
\[
\mathfrak{i} : \mathbb{C}^{\times} \rightarrow \Aut_{\mathfrak{M} \times \mathfrak{N}}(N, V,R),
\qquad
z \mapsto
\left( \begin{smallmatrix} z \cdot \id_{V} & 0 \\ 0 & z^{-1}  \cdot \id_{P(V)} \end{smallmatrix} \right) \oplus \id_R.
\]
Since the image of $\mathfrak{i}$ intersects $\Aut_{\mathfrak{F}}(U,N,V,R)$ trivially, $\mathfrak{i}$ defines a free $\mathbb{C}^{\times}$-action on the variety $\Aut_{\mathfrak{M} \times \mathfrak{N}}(N,V,R) \slash \Aut_{\mathfrak{F}}(U,N,V,R)$, whence $m_{p_2}(U,N,V,R) =0$. This implies that $p_{2!}$ restricts to the zero map on $\Con(\mathfrak{F} \backslash \mathfrak{G})$.
\end{proof}

The above construction can be modified to obtain honest comodules which satisfy equation \eqref{eq:constructModCompat}. Let $\mathbb{M}_Q^{\ind}$ be the $\mathbb{H}_Q^{\ind}$-submodule of $\mathbb{M}_Q$ generated by the subspace $\mathbb{W}^{\coinv}_Q \subset \mathbb{M}_Q$ of $\mathbb{H}_Q$-coinvariants. As in Lemma \ref{lem:cuspSubspace}, the vector space $\mathbb{W}^{\coinv}_Q$ is spanned by constructible functions supported on symmetric representations which admit no non-trivial hyperbolic summands.

\begin{Thm}
The subspace $\mathbb{M}^{\ind}_Q$ is a left module and comodule over $\mathbb{H}^{\ind}_Q$ which satisfies equation \eqref{eq:constructModCompat}.
\end{Thm}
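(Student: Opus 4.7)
The plan is to reduce the statement to Theorem \ref{thm:sdGreenConstructible} by a closure argument: $\mathbb{M}^{\ind}_Q$ is a left $\mathbb{H}^{\ind}_Q$-module by construction, and the compatibility equation \eqref{eq:constructModCompat} holds topologically on all of $\mathbb{M}_Q$, so the substantive point is that $\rho$ restricts to a genuine comodule map $\mathbb{M}^{\ind}_Q \to \mathbb{H}^{\ind}_Q \otimes_{\mathbb{C}} \mathbb{M}^{\ind}_Q$ with finitely supported image.

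First, for $\xi \in \mathbb{W}^{\coinv}_Q$, coinvariance gives $\rho(\xi) = 1_{\mathbb{H}} \otimes \xi$, which already lies in the honest tensor product. For a general generator $u \star \xi$ with $u \in \mathbb{H}^{\ind}_Q$ and $\xi \in \mathbb{W}^{\coinv}_Q$, Theorem \ref{thm:sdGreenConstructible} together with the coinvariance of $\xi$ yields, in Sweedler notation,
\[
\rho(u \star \xi) = \Delta^2(u) \ostar_P (1_{\mathbb{H}} \otimes \xi) = \sum u_{(1)} \cdot P_{\mathbb{H}}(u_{(2)}) \otimes u_{(3)} \star \xi.
\]
Since $\Delta$ restricts to an honest coproduct on $\mathbb{H}^{\ind}_Q$ by \cite[Theorem 4.20]{joyce2007} (cited in Section \ref{sec:constructHallAlg}), this is a finite sum whose Sweedler components $u_{(i)}$ all lie in $\mathbb{H}^{\ind}_Q$. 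Because $\mathbb{H}^{\ind}_Q$ is closed under multiplication and under $P_{\mathbb{H}}$ (the latter recalled in Section \ref{sec:constructHallMod}), each factor $u_{(1)} \cdot P_{\mathbb{H}}(u_{(2)})$ lies in $\mathbb{H}^{\ind}_Q$, while $u_{(3)} \star \xi \in \mathbb{H}^{\ind}_Q \star \mathbb{W}^{\coinv}_Q \subset \mathbb{M}^{\ind}_Q$ by the very definition of $\mathbb{M}^{\ind}_Q$. Extending by linearity over the generating set gives the honest comodule structure, and coassociativity along with the counit axiom are inherited from the corresponding topological properties of $\mathbb{M}_Q$ by restriction.

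The compatibility identity \eqref{eq:constructModCompat} on $\mathbb{M}^{\ind}_Q$ is then a direct restriction of Theorem \ref{thm:sdGreenConstructible}, with both sides now genuine elements of $\mathbb{H}^{\ind}_Q{}^{\otimes 3} \otimes \mathbb{M}^{\ind}_Q$ rather than merely formal ones. The main thing to watch is the Sweedler bookkeeping: one must verify that every invocation of $\Delta$ on an element of $\mathbb{H}^{\ind}_Q$ produces a finite sum inside $\mathbb{H}^{\ind}_Q \otimes \mathbb{H}^{\ind}_Q$, and this is precisely the content of Joyce's honest bialgebra theorem combined with $P_{\mathbb{H}}$-stability. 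Once these two closure properties are in place, the result is a formal consequence of the constructible degenerate Green's theorem proved above.
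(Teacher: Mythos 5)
Your proposal is correct and takes essentially the same route as the paper: reduce to Theorem \ref{thm:sdGreenConstructible} and use the honest bialgebra structure of $\mathbb{H}^{\ind}_Q$ together with its $P_{\mathbb{H}}$-stability to show $\rho$ lands in the algebraic tensor product $\mathbb{H}^{\ind}_Q \otimes_{\mathbb{C}} \mathbb{M}^{\ind}_Q$ on the generators $u \star \xi$ with $\xi$ coinvariant. The only difference in presentation is that the paper works with spanning elements of the form $(f_1 \cdots f_n) \star \xi$ with each $f_i$ primitive, so that the finiteness of $\Delta^2(f_1 \cdots f_n)$ is seen directly from $\Delta^2(f_i) = f_i \otimes 1 \otimes 1 + 1 \otimes f_i \otimes 1 + 1 \otimes 1 \otimes f_i$, whereas you cite Joyce's theorem that $\Delta$ restricts to an honest coproduct on $\mathbb{H}^{\ind}_Q$; both are valid and closely related.
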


\begin{proof}
Suppose that $f_1, \dots, f_n \in \mathbb{V}^{\prim}_Q$ and $\xi \in \mathbb{W}^{\coinv}_Q$. By Theorem \ref{thm:sdGreenConstructible}, we have
\[
\rho((f_1 \cdots f_n) \star \xi)
=
\Delta^2(f_1 \cdots f_n) \ostar_P (1_{\mathbb{H}} \otimes \xi).
\]
The primitive and coinvariant assumptions give
\[
\Delta^2(f_i) = f_i \otimes 1_{\mathbb{H}} \otimes 1_{\mathbb{H}} + 1_{\mathbb{H}} \otimes f_i \otimes 1_{\mathbb{H}} +1_{\mathbb{H}} \otimes 1_{\mathbb{H}} \otimes f_i
\]
and $\rho(\xi)= 1_{\mathbb{H}} \otimes \xi$, respectively. Hence $\Delta^2(f_1 \cdots f_n) \in \mathbb{H}_Q^{\otimes 3} \subset \Con(\mathfrak{M}^{\times 3})$ and $\rho(\xi) \in \mathbb{H}_Q \otimes_{\mathbb{C}} \mathbb{M}_Q \subset \Con(\mathfrak{M} \times \mathfrak{N})$. We can therefore replace $\ostar_P$ with $\star_P$. As $\mathbb{H}^{\ind}_Q$ is a bialgebra and is $P_{\mathbb{H}}$-stable, we then easily see that
\[
\rho((f_1 \cdots f_n) \star \xi) \in \mathbb{H}^{\ind}_Q \otimes_{\mathbb{C}}  \mathbb{M}^{\ind}_Q \subset \Con(\mathfrak{M} \times \mathfrak{N}).
\]
Since elements of the form $(f_1 \cdots f_n) \star \xi$ span $\mathbb{M}^{\ind}_Q$, we conclude that $\mathbb{M}^{\ind}_Q$ is an $\mathbb{H}^{\ind}_Q$-comodule. The remaining statements follow from Theorem \ref{thm:sdGreenConstructible}.
\end{proof}

The results of Section \ref{sec:lieAlgReps} continue to hold in the constructible setting, with essentially the same proofs.

\bibliographystyle{plain}
\bibliography{mybib}
 

\end{document}